\newtheorem{theorem}{Theorem}[section]
\newtheorem{corollary}[theorem]{Corollary}
\newtheorem{lemma}[theorem]{Lemma}
\newtheorem{proposition}[theorem]{Proposition}
\theoremstyle{assumption}
\theoremstyle{definition}
\newtheorem{definition}[theorem]{Definition}
\theoremstyle{remark}
\newtheorem{remark}[theorem]{Remark}
\numberwithin{equation}{section}
\newcommand{\eps}{\varepsilon}
\newcommand{\norm}[1]{\Vert#1\Vert}
\newcommand{\abs}[1]{\left\vert#1\right\vert}
\newcommand{\set}[1]{\left\{\,#1\,\right\}}
\newcommand{\inner}[1]{\left(#1\right)}
\newcommand{\comi}[1]{\left<#1\right>}
\newcommand{\normm}[1]{{ \vert\kern-0.25ex \vert\kern-0.25ex \vert #1
		\vert\kern-0.25ex \vert\kern-0.25ex \vert}}
\def\eqdef{\buildrel\hbox{\footnotesize def}\over =}
 \newbox \abstractbox
\renewenvironment{abstract}{\global\setbox\abstractbox=\vbox\bgroup
 \hsize=\textwidth
  \vskip 1.2cm
  \noindent\unskip \textbf{Abstract.}
 }
 {
 \egroup}
 \def\@startsection#1#2#3#4#5#6{%
 \if@noskipsec \leavevmode \fi
 \par \@tempskipa #4\relax
 \@afterindentfalse
 \ifdim \@tempskipa <\z@ \@tempskipa -\@tempskipa \@afterindentfalse\fi
 \if@nobreak \everypar{}\else
     \addpenalty\@secpenalty\addvspace\@tempskipa\fi
 \@ifstar{\@dblarg{\@sect{#1}{\@m}{#3}{#4}{#5}{#6}}}%
         {\@dblarg{\@sect{#1}{#2}{#3}{#4}{#5}{#6}}}%
}
\def\@settitle{%
  \bgroup
  \centering
  \vglue1cm
  \fontsize{12}{15}\fontseries{b}\selectfont
  \@title
  \vskip 20pt plus 6pt minus 8pt
  \egroup
}
\def\@setauthors{%
  \begingroup
  \trivlist
  \centering \bfseries
 \normalsize\@topsep30\p@\relax
  \advance\@topsep by -\baselineskip
  \item\relax
  \andify\authors
 {\rmfamily\authors}%
  \endtrivlist
  \endgroup
}
\def\@setaddresses{\par
  \nobreak \begingroup
\normalsize
  \def\author##1{\nobreak\addvspace\bigskipamount}%
  \def\\{\unskip, \ignorespaces}%
  \interlinepenalty\@M
  \def\address##1##2{\begingroup
    \par\addvspace\bigskipamount\noindent
    \@ifnotempty{##1}{(\ignorespaces##1\unskip) }%
    {\ignorespaces##2}\par\endgroup}%
  \def\curraddr##1##2{\begingroup
    \@ifnotempty{##2}{\nobreak\indent{\itshape Current address}%
      \@ifnotempty{##1}{, \ignorespaces##1\unskip}\/:\space
      ##2\par}\endgroup}%
  \def\email##1##2{\begingroup
    \@ifnotempty{##2}{\nobreak\noindent{\itshape E-mail address}%
      \@ifnotempty{##1}{, \ignorespaces##1\unskip}\/:
       ##2\par}\endgroup}%
   \def\urladdr##1##2{\begingroup
    \@ifnotempty{##2}{\nobreak\indent{\itshape URL}%
      \@ifnotempty{##1}{, \ignorespaces##1\unskip}\/:\space
      \ttfamily##2\par}\endgroup}%
  \addresses
  \endgroup
}
 \renewcommand\section{\@startsection{section}{1}{\z@}%
{27pt plus 6pt minus 8pt}{14pt plus 6pt minus 8pt}
{\center\normalfont\large\bfseries}}
\begin{document}

\title[Hyperbolic quasi-linear Prandtl equations]{Gevrey   well-posedness of quasi-linear hyperbolic Prandtl equations}

\author[ W.-X. Li, T. Yang \and P. Zhang]{Wei-Xi Li, Tong Yang \and  Ping Zhang}

\date{}

\address[W.-X. Li]{School of Mathematics and Statistics,   Wuhan University,  Wuhan 430072, China
	\& Hubei Key Laboratory of Computational Science, Wuhan University, Wuhan 430072,  China}

\email{wei-xi.li@whu.edu.cn}

\address[T.Yang]{PolyU}

\email{t.yang@polyu.edu.hk}

\address[P. Zhang]
	{Academy of
		Mathematics $\&$ Systems Science and  Hua Loo-Keng Key Laboratory of
		Mathematics, The Chinese Academy of Sciences, China, and School of Mathematical Sciences, University of Chinese Academy of Sciences, Beijing 100049, China.}
	\email{zp@amss.ac.cn}

\keywords{Hyperbolic Prandtl equations, Quasi-linear, Gevrey class}
\subjclass[2020]{76D10, 76D03, 35L80, 35L72, 35Q30}

\begin{abstract}
We study the hyperbolic version of the Prandtl system derived from the hyperbolic Navier-Stokes system with no-slip boundary condition. Compared to the classical  Prandtl system, the quasi-linear terms in the hyperbolic Prandtl equation leads to an additional instability
	mechanism. To overcome the loss of derivatives in all directions in the quasi-linear term, we introduce a new  auxiliary function for the well-posedness of the system in an anisotropic Gevrey space which is Gevrey class $\frac 32$ in the tangential variable and is analytic  in the normal variable.
\end{abstract}

 \maketitle


\section{Introduction}

We investigate  the  well-podedness of the following quasi-linear hyperbolic Prandtl system in the half-space $\mathbb  R_+^d \eqdef\big\{(x,y); \ x\in\mathbb R^{d-1}, y>0\big \}  $  with $d=2$ or $3:$
\begin{equation}\label{hyperPr}
\left\{
\begin{aligned}
& \eta\partial_t^2u+\partial_tu+(u\cdot\partial_x) u+v\partial_yu+\eta\partial_t\big(  (u\cdot \partial_x )u+v\partial_y u\big) - \partial_y^2u+ \partial_x p=0,  \\
&\partial_x u +\partial_yv =0,\\
&u|_{y=0}=v|_{y=0}=0,\quad u|_{y\rightarrow+\infty}=U,\\
&u|_{t=0} =u_0,\quad \partial_t u|_{t=0}=u_1,
\end{aligned}\right.
\end{equation}
where $0<\eta<1$ is a small parameter. The unknown $u$  represents the tangential velocity which is   scalar in the two-dimensional (2D) case  and vector-valued in 3D. And the functions  $p=p(t,x)$ and $U=U(t,x)$ in \eqref{hyperPr} are the traces of the tangential velocity field and pressure of the outer flow on the boundary satisfying that
\begin{equation*}
	 \eta\partial_t^2 U+\partial_tU+U\cdot\partial_x U +\eta\partial_t\big(  U\cdot \partial_x U\big) + \partial_x p=0.
\end{equation*}
 This  degenerate  hyperbolic system  \eqref{hyperPr} can be
 derived  from the  hyperbolic Navier-Stokes equations  with  the  no-slip boundary condition.   It is well-known
 that the classical  Navier-Stokes system can be obtained from the Newtonian  law. And its parabolic 
 structure leads to  the property of infinite speed of propagation which seems to be a paradox from the physical point of view.   To have finite propagation,  Cattaneo \cite{MR0032898, MR95680}   proposed  to  replace the Fourier law by the  so-called  Cattaneo law,  where a small time delay $\eta$ is introduced  in  stress tensors. And this yields the following hyperbolic version of  Navier-Stokes equations:
 \begin{equation}\label{hyns}
 	 \eta\partial_t^2u^{NS}+\partial_tu^{NS}+(u^{NS}\cdot\nabla) u^{NS} +\eta\partial_t\big(  (u^{NS}\cdot\nabla) u^{NS}\big) - \eps \Delta  u^{NS}+ \nabla p ^{NS} =0,
 \end{equation}
 where the gradient operator  $\nabla$ is taken with respect to all spatial variables,  and similarly for the Laplace operator $\Delta$.   In the whole space, the system \eqref{hyns} with fixed viscosity $\eps>0$ was studied by Coulaud-Hachicha-Raugel \cite{MR4506781}  in  almost optimal function spaces (see also \cite{MR3085226, MR3942552}).  On the other hand, it is  natural to study the inviscid limit of \eqref{hyns} as $\eps\rightarrow 0$, in particular in  the situation when the fluid domain has a physical boundary. In fact,   when we analyze  the  asymptotic expansion with respect to  the viscosity  $\eps$ of  \eqref{hyns} with the  no-slip boundary condition,   a Prandlt type boundary layer is expected to take care of the mismatched   tangential velocities. In fact, 
  the governing equation of the boundary layer  is  the system \eqref{hyperPr}  by following the Prandtl's ansatz.

When $\eta=0$, the system \eqref{hyperPr}  is the classical Prandtl equations. The mathematical study of  the classical Prandtl boundary layer has a  long history with fruitful results and developed approaches in analysis. It has been well studied  in various function spaces,  see, e.g., \cite{MR3327535, MR3795028, MR3670620,  MR3983729, MR3458159, MR3600083, MR3925144,  MR1476316, MR2601044, MR3429469, MR2849481,  MR3461362, MR3284569,     MR3493958, MR4055987,  MR2020656,MR3464051,CMAA-1-345,MR4271962,2021arXiv210300681Y} and the references therein. Due to the loss of tangential derivatives in the nonlocal term $v\partial_yu$, the Prandtl system is usually ill-posed in Sobolev spaces.  It is now well understood that, for initial data without any structural assumption,  the  Prandtl system is well-posed in Gevrey class with optimal Gevrey index $ 2$ by the instability
 analysis of    G\'erard-Varet and Dormy \cite{MR2601044} and the work on well-posedness of
 Dietert-G\'erard-Varet \cite{MR3925144} and  Li-Masmoudi-Yang \cite{MR4465902}.   The key observation in \cite{MR3925144, MR4465902} is about  some kind of  intrinsic  structure  that is similar to hyperbolic feature for one order loss of tangential derivatives. Recently, inspired by the stabilizing effect  of the intrinsic hyperbolic type structure, Li-Xu-Yang  \cite{MR4494626} showed  the global well-posedness of  a Prandtl Model from MHD in the Gevrey 2 setting.

 The hyperbolic Prandtl system \eqref{hyperPr} 
 is more complicated in terms of loss of derivatives due to the  quasi-linear term  $\eta\partial_t(u\cdot \partial_xu+v\partial_yu)$.  In fact, as to be seen below, the loss of derivatives occurs     not only for the tangential variable  but also for the normal variable.    Inspired by the abstract Cauchy-Kowalewski theory, one can expect the well-posedness of \eqref{hyperPr} in the full analytic spaces (i.e., space of functions that are analytic in all variables).    However,  it is hard to relax the analyticity to Gevrey class because the nonlinearity and non-locality in the term $\partial_t((u\cdot \partial_x)u+v\partial_yu)$ that prevent us to apply the techniques developed for
   the classical Prandtl equation directly. An attempt is to consider the following semi-linear model
 \begin{equation}\label{semipr}
 	 \eta\partial_t^2u+\partial_tu+(u\cdot\partial_x) u+v\partial_yu- \partial_y^2u+ \partial_x p=0
 \end{equation}
 by   removing  the  quasi-linear term $\eta\partial_t\big(  (u\cdot \partial_x) u+v\partial_y u\big) $ in \eqref{hyperPr}.  For this, the local well-posedness of \eqref{semipr} in Gevrey 2 space is obtained by \cite{MR4498949} which is   the same Gevrey space  for the classical Prandtl equation.
 However,  the Gevrey  index 2 may not be  optimal for the well-posedness of \eqref{semipr}.
 Hence, it is interesting to find out whether there is  a larger  Gevrey index for well-posedness  by exploring  the stabilizing effect of the hyperbolic perturbation $\eta\partial_t^2$ therein.

   Similar problems occur when investigating the following hyperbolic hydrostatic Navier-Stokes equations:
   \begin{equation}\label{NShy}
\left\{
\begin{aligned}
& \eta\partial_t^2 \tilde u+\partial_t\tilde u+(\tilde u\cdot\partial_x) \tilde u+ \tilde v\partial_y \tilde u\\
&\qquad\qquad +\eta\partial_t\big(  (\tilde u\cdot \partial_x )\tilde u+\tilde v\partial_y \tilde u\big) - \partial_y^2\tilde u+ \partial_x \tilde p=0,  \quad (x,y)\in \mathbb R\ \times\ ]0,1[,\\
&\partial_y\tilde p=0,  \quad (x,y)\in \mathbb R\ \times\ ]0,1[,\\
&\partial_x \tilde u +\partial_y\tilde v =0,  \quad (x,y)\in \mathbb R\ \times\ ]0,1[, \\
&\tilde u|_{y=0,1}=\tilde v|_{y=0,1}=0,  \quad  x\in \mathbb R,\\
&\tilde u|_{t=0} =\tilde u_0,\quad \partial_t\tilde u|_{t=0}=\tilde u_1,  \quad (x,y)\in \mathbb R\ \times\ ]0,1[.
\end{aligned}\right.
\end{equation}
The hydrostatic Navier-Stokes system \eqref{NShy} has a similar degeneracy feature as the Prandtl equation.  This system  that can be used to describe the large scale motion of geophysical flow plays an   important role in the atmospheric and oceanic sciences.  It is  a  limit  of the  hyperbolic  Navier-Stokes equations  \eqref{hyns}  in a thin domain  where  the vertical scale is significantly smaller than the horizontal one.  Compared to the classical Prandtl equation, much less is known for  the hydrostatic Navier-Stokes equations \eqref{NShy}. In fact, the  well-posedness of  the hydrostatic Navier-Stokes equations in Sobolev space is still unclear. Under the convex assumption,    the  Gevrey  well-posedness has been established by
G\'{e}rard-Varet-Masmoudi-Vicol \cite{MR4149066}, and later improved   by   Wang-Wang \cite{WW2022} and G\'{e}rard-Varet-Iyer-Maekawa  \cite{GIM} with
  the optimal Gevrey index  $\frac 32$. 
The aforementioned works  mainly focus on the hydrostatic equations of the  parabolic type. To the best of our knowledge, there is  no mathematical theory   on the well-posedness  of the hyperbolic hydrostatic Navier-Stokes equations. Here, we just mention some recent works attempting to explore the hyperbolic feature of  some simplified semi-linear models ( cf. \cite{2021arXiv211113052A,CMAA-1-471, MR4429384}) in order to investigage the wave type property that lead to some kind of stability effect compared to the parabolic counterparts. In addition, a  quasi-linear model was   recently studied by \cite{LPZ}.         However, the well-posedness property for the  full quasi-linear system \eqref{NShy}   remains as a challenging problem.

   This paper aims to investigate the hyperbolic Prandtl system \eqref{hyperPr} in  an anisotropic Gevrey space (see Definition \ref{degev} below). To simplify the argument, we  assume  without loss of  generality  that $\eta=1$ and  $\partial_xp=U\equiv 0$. Then we  consider 
  \begin{equation}\label{hypr}
\left\{
\begin{aligned}
&  \partial_t^2u+\partial_tu+(u\cdot\partial_x) u+v\partial_yu+ \partial_t\big(  (u\cdot \partial_x )u+v\partial_y u\big) - \partial_y^2u=0, \ \  (x,y)\in\mathbb R_+^d, \\
&\partial_x u +\partial_yv =0,\\
&u|_{y=0}=v|_{y=0}=0,\quad u|_{y\rightarrow+\infty}=0,\\
&u|_{t=0} =u_0,\quad \partial_t u|_{t=0}=u_1.
\end{aligned}\right.
\end{equation}

 \bigskip
 \noindent {\bf Notations.}  In the half-space $  \mathbb R_+^d$ with $d=2$ or $3$,   we will use  $\norm{\cdot}_{L^2}$ and $\inner{\cdot, \cdot}_{L^2}$ to denote the norm and inner product of  $L^2=L^2(\mathbb R_+^d)$   and use the notation   $\norm{\cdot}_{L_x^2}$ and $\inner{\cdot, \cdot}_{L_x^2}$  when the variable $x$ is specified. Similar notations  will be used for $L^\infty$.  And  $L^p_x L^q_y  = L^p (\mathbb R^{d-1}; L^q(\mathbb R_+))$.

 \begin{definition}\label{degev}
 The anisotropic Gevrey space	$G^{3/2,1}_{\rho,\ell}$ consists of all smooth functions $h(x,y) $ that are analytic in $y$ and of  Gevrey class $3/2$ in $x$ satisfying 
 \begin{equation*}
 	\norm{h}_{G^{3/2,1}_{\rho,\ell}}<+\infty,
 \end{equation*}
 with
 \begin{multline*}
 	\norm{h}_{G^{3/2,1}_{\rho,\ell}} ^2 \eqdef  \sum_{ m= 0}^{+\infty} \inner{  N_{\rho, m}     \norm{  \comi y^{\ell-1} \partial_x^mh}_{L^2}}^2\\
 	  +\sum_{ m= 0}^{+\infty}\sum_{k= 0}^{+\infty} \inner{(m+1) H_{\rho, m+1,k}   \norm{  \comi y^\ell  \partial_x^m\partial_y^{k} \partial_yh}_{L^2}}^2,
 \end{multline*}
 where $\comi y\eqdef (1+\abs y^2)^{1/2}$ and  the number $\ell\geq 2$ is   given   and 
 \begin{equation}\label{nl}
 	 H_{\rho,m,k}= \frac{\rho^{m+k+1}(m+k+1)^9}{(m+k)! (m!)^{1/2}}  \ \textrm{ and }\  N_{\rho,m}=H_{\rho,m,0}=\frac{\rho^{m+1}(m+1)^9}{(m!)^{3/2}}.
 \end{equation}
 \end{definition}

The main result of the paper is stated as follows.

 \begin{theorem}\label{thm:main}
 If  the initial data of the hyperbolic Prandtl system \eqref{hypr}   satisfy   $u_0\in G^{3/2,1}_{2\rho_0,\ell}$ and  $u_1\in G^{3/2,1}_{2\rho_0,\ell+1}$ for some $\rho_0>0$ and are compatible to the boundary conditions in \eqref{hypr}.  Then  problem \eqref{hypr} admits a unique local solution $u\in L^\infty\big([0,T]; \ G^{3/2,1}_{\rho,\ell}\big)$ for some $T>0$ and $0<\rho\leq \rho_0$.
 \end{theorem}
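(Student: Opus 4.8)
The first step is to remove the quasi-linear term by a change of unknown. Since
\begin{equation*}
\partial_t^2u+\partial_tu+(u\cdot\partial_x)u+v\partial_yu+\partial_t\big((u\cdot\partial_x)u+v\partial_yu\big)=(1+\partial_t)\big(\partial_tu+(u\cdot\partial_x)u+v\partial_yu\big),
\end{equation*}
we introduce the auxiliary function $g\eqdef\partial_tu+(u\cdot\partial_x)u+v\partial_yu$, i.e.\ the material derivative of $u$, so that \eqref{hypr} is equivalent to the first-order-in-time coupled system
\begin{equation*}
\left\{\begin{aligned}
&\partial_tu+(u\cdot\partial_x)u+v\partial_yu=g,\qquad\partial_xu+\partial_yv=0,\\
&\partial_tg+g=\partial_y^2u,\\
&u|_{y=0}=v|_{y=0}=0,\qquad u|_{y\to+\infty}=0,\\
&u|_{t=0}=u_0,\qquad g|_{t=0}=g_0\eqdef u_1+(u_0\cdot\partial_x)u_0+v_0\partial_yu_0,
\end{aligned}\right.
\end{equation*}
with $v_0=-\int_0^y\partial_xu_0\,dy'$. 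In this form the quasi-linear term is never differentiated, and the loss of derivatives is reorganized into two separate mechanisms: one \emph{tangential} derivative through $v$ in the transport term $v\partial_yu$ of the first equation, and two \emph{normal} derivatives in the coupling $\partial_y^2u$ of the second. Checking the compatibility conditions, the hypotheses on $(u_0,u_1)$ give $u_0\in G^{3/2,1}_{2\rho_0,\ell}$ and $g_0\in G^{3/2,1}_{2\rho_0,\ell+1}$; the shift of the weight index for $g$ reflects that $\partial_y^2u$ is controlled by the normal-derivative part of $\norm{u}_{G^{3/2,1}_{\rho,\ell}}$, which carries the heavier weight $\comi y^\ell$ and the extra factor $m+1$.

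The plan is then to propagate a time-dependent Gevrey energy. For a smooth solution on $[0,T]$ set $\rho(t)=\rho_0-\lambda t$ with a large constant $\lambda$, so that $\rho(t)\geq\rho_0/2$ on a short interval, and consider
\begin{equation*}
E(t)\eqdef\norm{u(t)}_{G^{3/2,1}_{\rho(t),\ell}}^2+\norm{g(t)}_{G^{3/2,1}_{\rho(t),\ell+1}}^2,
\end{equation*}
together with the dissipation $D(t)\geq0$ produced by $\dot\rho(t)=-\lambda<0$, which contributes in every Gevrey sum a gain of one power of the derivative order. Applying $\partial_x^m$ and $\partial_x^m\partial_y^{k+1}$ to the $u$-equation, multiplying by $N_{\rho,m}$ and $(m+1)H_{\rho,m+1,k}$ respectively and pairing with the corresponding weighted derivatives, the transport terms split into diagonal pieces — skew-symmetric up to bounded terms after integrating by parts via $\partial_xu+\partial_yv=0$ — and commutator pieces, closed by the convolution inequalities for the weight sequences $\set{N_{\rho,m}}$ and $\set{H_{\rho,m,k}}$, where the role of the polynomial gains $(m+k+1)^9$ is to make those Leibniz sums summable. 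For the $g$-equation one applies the same derivatives, uses the favourable sign of the damping term $g$, and handles the source $\partial_y^2u$ by integrating by parts once in $y$ against $g$, turning $\inner{\partial_x^m\partial_y^2u,\,\partial_x^m g}_{L^2}$ into $-\inner{\partial_x^m\partial_yu,\,\partial_x^m\partial_yg}_{L^2}$, a pairing at matched normal-derivative order; the residual mismatch then amounts to the loss of only ``half'' a tangential derivative.

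Collecting the estimates, the genuinely dangerous contributions are exactly two: the top-order part $\big(\int_0^y\partial_x^{m+1}u\,dy'\big)\partial_yu$ of $v\partial_yu$ feeding the $u$-estimate, and the residual normal-to-tangential mismatch coming from $\partial_y^2u$ in the $g$-estimate. After a Hardy-type inequality in $y$ and the integration by parts above, both are bounded by a multiple of $D(t)^{1/2}E(t)^{1/2}$ with a coefficient independent of $\lambda$, and it is precisely the Gevrey index $3/2$ together with the extra factor $m+1$ in the normal part of the norm that allows the $\dot\rho$-dissipation $D(t)$ to absorb them. Choosing $\lambda=\lambda\big(\rho_0,\norm{u_0}_{G^{3/2,1}_{2\rho_0,\ell}},\norm{g_0}_{G^{3/2,1}_{2\rho_0,\ell+1}}\big)$ large enough, one arrives at
\begin{equation*}
\frac{d}{dt}E(t)+D(t)\leq C\big(1+E(t)\big)^{C}E(t),
\end{equation*}
and a continuation argument gives $E(t)\leq2E(0)$ on some $[0,T]$ with $T>0$ and $\rho\eqdef\rho(T)\in(0,\rho_0]$. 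Existence follows by running this a priori estimate uniformly on a regularized problem — mollified initial data and a vanishing artificial viscosity $-\eps\partial_x^2u$ in \eqref{hypr}, or a suitable iteration scheme in the variable in which analyticity is required — and passing to the limit using the uniform bound in $L^\infty\big([0,T];G^{3/2,1}_{\rho,\ell}\big)$; uniqueness follows by applying the same scheme to the difference of two solutions and the difference of the associated unknowns $g$, which satisfies a linear version of the estimate with coefficients controlled by the two solutions, so that Gr\"onwall forces it to vanish in a slightly smaller Gevrey space.

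The main obstacle, as emphasized in the abstract, is the simultaneous loss of derivatives in both variables: one has to check that a \emph{single} family of weights $H_{\rho,m,k}$ — with the total-order factorial $(m+k)!$ providing analyticity in $y$, the extra half power $(m!)^{1/2}$ providing Gevrey $3/2$ in $x$, the polynomial gains $(m+k+1)^9$, and the extra factor $m+1$ in the normal part — is at once compatible with the convolution estimates for all the nonlinear transport terms, with the index shifts $k\mapsto k+1,\,k+2$ forced by the coupling $\partial_y^2u$, and with absorbing the genuine top-order tangential loss of $v\partial_yu$ into the $\dot\rho$-dissipation. Once the auxiliary function $g$ is introduced and the weights are tuned to this coupling structure, the remaining computations, though lengthy, are essentially routine weighted energy estimates.
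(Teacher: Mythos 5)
Your reformulation via $g=\partial_tu+(u\cdot\partial_x)u+v\partial_yu$ coincides with the paper's $\varphi$ (see \eqref{varp} and \eqref{neweq}), and the time-dependent Gevrey-radius bookkeeping is also what the paper does. But the core of your argument — that the top-order commutator $(\partial_x^m v)\partial_yu$ (equivalently $(\int_0^y\partial_x^{m+1}u)\partial_yu$) from $v\partial_yu$ can be bounded by $D(t)^{1/2}E(t)^{1/2}$ and absorbed into the $\dot\rho$-dissipation in a Gevrey $3/2$ norm — does not hold, and this is precisely the obstacle the paper's auxiliary functions $\mathcal U$, $\lambda$ are designed to overcome. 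Indeed, with $N_{\rho,m}\sim\rho^{m}/(m!)^{3/2}$ one has $N_{\rho,m}/N_{\rho,m+1}\sim(m+1)^{3/2}$, so the commutator paired against $\partial_x^m u$ (or $\partial_x^m\partial_yu$) produces, after Hardy's inequality, a term of size $(m+1)^{3/2}N_{\rho,m+1}N_{\rho,m}\norm{\partial_x^{m+1}u}\norm{\partial_x^m u}$ (times lower-order factors), while the dissipation $D$ produced by $\dot\rho<0$ only supplies a gain of one power of $m+1$. Young's inequality then leaves an unabsorbed factor $(m+1)^{1/2}$, and the estimate closes only at Gevrey index $1$, i.e.\ analyticity in $x$. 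The extra factor $m+1$ you invoke from the normal part of the norm does not help here either: if you instead pair $(\partial_x^m v)\partial_y^2u$ against $\partial_x^m\partial_yu$ with the weight $(m+1)^2H_{\rho,m+1,k}^2$, you must in turn bound $\norm{\partial_x^m v}$ via $\norm{\partial_x^{m+1}u}$, and the same $(m+1)^{1/2}$ gap reappears. In the paper this is exactly the role of Corollary~\ref{cor:v}, which bounds $(m+1)N_{\rho,m+1}^2\norm{\partial_x^m v}^2$ by $|\vec a|_{Y_\rho}^2$ — and this corollary rests entirely on Lemma~\ref{lem:tang}, which is proved from the identity $\partial_x^{m+1}u=\partial_x^m\lambda+(\text{lower order in }\mathcal U,\partial_yu)$ coming from \eqref{ma}.

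The missing idea is the cancellation mechanism of \cite{MR3925144,MR4465902}: one introduces $\mathcal U$ solving the transport problem \eqref{mau} so that $\lambda\eqdef\partial_xu-(\partial_yu)\int_0^y\mathcal U$ evolves by \eqref{lateq},
\begin{equation*}
\big(\partial_t+u\partial_x+v\partial_y\big)\lambda=\partial_x\varphi-(\partial_xu)^2-(\partial_y\varphi)\int_0^y\mathcal U\,d\tilde y,
\end{equation*}
in which the dangerous $(\partial_xv)\partial_yu$ has been cancelled. The remaining top-order loss is one tangential derivative on $\varphi$ rather than two on $u$, and because the energy weights the $\varphi$-component with an extra factor $(m+1)^2$ compared to the $(m+1)$ on $\lambda$ (see \eqref{defX}--\eqref{defY}), the ratio $N_{\rho,m+1}/N_{\rho,m+2}\sim(m+2)^{3/2}$ is exactly compensated and the estimate closes at Gevrey $3/2$. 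Since the proof of Proposition~\ref{prp:varu} (the estimate on $\varphi$ and $\partial_yu$) itself invokes Corollary~\ref{cor:v}, and hence $\lambda$ and $\mathcal U$, there is no way to sidestep these auxiliary unknowns by a direct energy estimate on $u$ and $g$ alone. Without them your scheme cannot reach Gevrey index $3/2$; it only proves local well-posedness for data analytic in the tangential variable.
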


 The key part in the proof of Theorem \ref{thm:main}  is to derive  the {\it  a  priori} estimate for   \eqref{hypr}  so that  the existence and uniqueness   follow  from  a standard argument.  Hence,  for brevity, we only present the proof of  the {\it a priori} estimate.
 
  The paper is organized as follows.  Sections \ref{sec:aux}-\ref{sec:comple} are for proving the
   {\it a priori} estimate in  2D. The proof in 3D  will then be presented in Section \ref{sec:3D}.     

\section{The \emph{a priori} estimate in 2D}\label{sec:aux}

In this section we will state the \emph{a priori} estimate for the  hyperbolic Prandtl system \eqref{hypr} when $d=2$ and its proof  will be given in  Sections \ref{sec:tange}-\ref{sec:comple}. 

  In the following argument we assume  the initial data    in \eqref{hypr} satisfy that  $u_0\in G^{3/2,1}_{ 2\rho_0,\ell}$ and $u_1\in G^{3/2,1}_{ 2\rho_0,\ell+1}$  for some $\rho_0> 0$, and suppose  that $u\in L^\infty  \big([0, T];\  G^{3/2,1}_{ \rho,\ell}\big)$  solves \eqref{hypr}
  where 
 	 \begin{equation}\label{rho}
	\rho=\rho(t)\eqdef \rho_0e^{-\mu t},\quad 0\leq t\leq T,
	\end{equation}
with $\mu>1$ beging  a given  large constant to be determined later.   By using the notation  that
\begin{equation}\label{varp}
\varphi=	\partial_tu+u\partial_x u+v\partial_yu  \  \textrm{  with  }  \  v(t, x,y)=-\int_0^y \partial_x u(t,x,\tilde y) d\tilde y,
\end{equation}
we can reformulate system \eqref{hypr} in  2D 
as
\begin{equation}\label{neweq}
	\left\{
\begin{aligned}
&\partial_tu+u\partial_x u+v\partial_yu=\varphi,\\
&  \partial_t\varphi+\varphi- \partial_y^2u=0,    \\
&u|_{y=0}=\varphi|_{y=0}=0,\\
&u|_{t=0} =u_0,\quad \varphi|_{t=0}=\varphi_0,
\end{aligned}\right.
\end{equation}
where
\begin{equation}\label{vazero}
	\varphi_0=u_1+u_0\partial_xu_0-(\partial_yu_0)\int_0^y \partial_xu_0(x,\tilde y)d\tilde y.
\end{equation}
As for the classical  Prandtl equation, the loss of one order tangential derivatives occurs in the first equation of  \eqref{neweq}.  To overcome this difficulty,  inspired by \cite{MR4465902} we introduce two auxiliary functions $\mathcal U$ and $\lambda$. Precisely, let
 $\mathcal U$  be a solution to the Cauchy problem
\begin{eqnarray}\label{mau}
\left\{
\begin{aligned}
& \inner{\partial_t+u\partial_x+v\partial_y}   \int_0^y\mathcal U  d\tilde y   =  -\partial_x v,\\
& \mathcal U|_{t=0}=0.
\end{aligned}
\right.
\end{eqnarray}
The existence of $\mathcal U$  follows from the standard  theory of transport equations. In fact, one can
 first apply the existence theory for  linear transport equations to  construct a solution $f$ to the  Cauchy problem
\begin{eqnarray*}
	\left\{
\begin{aligned}
&\inner{\partial_t+u\partial_x+v\partial_y}f=  -\partial_x v,\\
&  f|_{y=0}=0,\\
&f|_{t=0}=0,
\end{aligned}
\right.
\end{eqnarray*}
and then set $f=\int_0^y \mathcal U(t,x,\tilde y) d\tilde y$.

 By virtue of  $\mathcal U$ and
 \begin{eqnarray}\label{ma}
  \lambda\eqdef \partial_xu-   (\partial_yu)\int_0^y\mathcal U(t,x,\tilde y)  d\tilde y ,
\end{eqnarray}
we can cancel the  term involving  $v$ with  the  highest tangential derivative
as shown in the following equation \eqref{lateq}.  The two auxiliary functions
have the   relation
 \begin{eqnarray}\label{ymau}
\begin{aligned}
\inner{\partial_t+u\partial_x+v\partial_y} \mathcal U
= \partial_x\lambda + (\partial_x\partial_yu)\int_0^y\mathcal U(t,x,\tilde y)  d\tilde y+(\partial_xu)\mathcal U.
\end{aligned}
\end{eqnarray}
In addition,  we apply  $\partial_x$ to the first equation in \eqref{neweq} and multiply \eqref{mau} by $\partial_y u$. Then the subtraction of these two equations  yields the following equation for $\lambda:$
\begin{equation}\label{lateq}
\big(\partial_t  +u\partial_x +v\partial_y  \big)\lambda
	= \partial_x\varphi  -(\partial_xu)\partial_xu-  (\partial_y\varphi)  \int_0^y\mathcal U d\tilde y.
\end{equation}

   \begin{definition}\label{defabnorm}
Let $\ell\geq2$ be the number given in Definition \ref{degev}, and let $\varphi, \mathcal U, \lambda$ be given  in \eqref{varp}, \eqref{mau} and \eqref{ma}, respectively. 	By denoting
	\begin{equation*}
		\vec a=(u,  \mathcal U,   \lambda, \varphi),
	\end{equation*}
	we define $|\vec a|_{X_\rho} $ and $ |\vec a|_{Y_\rho} $ by
	 \begin{equation}\label{defX}
\begin{aligned}
		& |\vec a|_{X_{\rho}}^2 =   \sum_{ m=0}^{\infty}   N_{\rho,m+1}^2  \norm{ \partial_x^m\mathcal U}_{L^2}^2  + \sum_{ m=0}^{\infty}    (m+1)     N_{\rho,m+1}^2    \norm{\comi y^{\ell-1}\partial_x^m \lambda}_{L^2}^2  \\
	 & \qquad\quad +  \sum_{m,  k\geq 0}       (m+1)^2 H_{\rho,m+1,k}^2\inner{ \norm{  \comi y^\ell  \partial_x^m\partial_y^k\varphi }_{L^2}^2+\norm{  \comi y^\ell  \partial_x^m\partial_y^{k}\partial_y u}_{L^2}^2},
	\end{aligned}
	\end{equation}
	and
	\begin{equation}\label{defY}
	\begin{aligned}
		&|\vec a|_{Y_{\rho}}^2 =  \sum_{ m=0}^{\infty}   (m+1)N_{\rho,m+1}^2  \norm{ \partial_x^m\mathcal U}_{L^2}^2  +\sum_{ m=0}^{\infty}   (m+1)^2N_{\rho,m+1}^2          \norm{  \comi y^{\ell-1}\partial_x^m \lambda}_{L^2}^2  \\
		 & + \sum_{m,  k\geq 0}      \inner{ m+k+1}   (m+1)^2 H_{\rho,m+1,k} ^2\Big(\norm{  \comi y^\ell  \partial_x^m\partial_y^k\varphi }_{L^2}^2+\norm{  \comi y^\ell  \partial_x^m\partial_y^{k} \partial_y u}_{L^2}^2\Big).
		\end{aligned}
	\end{equation}
	Recall  that  $N_{\rho,m}$ and $H_{\rho,m,k}$ are given in \eqref{nl} with $\rho$     defined in  \eqref{rho}.
\end{definition}

\begin{remark} From \eqref{defX} and \eqref{defY}, it follows directly that
\begin{equation}\label{XY}
	|\vec a|_{X_{\rho}}\leq |\vec a|_{Y_{\rho}}.
\end{equation}
Moreover, as shown in Lemma \ref{lem:tang} below,
\begin{align*}
\norm{u}_{G^{3/2,1}_{\rho,\ell}}\leq C		|\vec a|_{X_{\rho}},
\end{align*}
where $C$ is a constant depending only on $\rho_0,\ell$ and the Sobolev embedding  constants.
Denote $\vec a(0)=\vec a|_{t=0}.$ Then there exists a constant $C_0>0$ such that
	\begin{equation}\label{init}
		\abs{\vec a(0)}_{X_{\rho_0}}\leq  C_0 \Big(\norm{u_0}_{G^{3/2,1}_{2\rho_0,\ell}}+\norm{u_1}_{G^{3/2,1}_{2\rho_0,\ell+1}}\Big).
	\end{equation}
	For clear presentation, the proof of \eqref{init} will be given in Appendix \ref{sec:app}.
\end{remark}

We can now state the theorem on the a priori estimate. 

\begin{theorem}[\emph{A priori} estimate]
 	\label{thm:apriori}
 	Under the same assumption as  in Theorem \ref{thm:main},   there exists  a 
 	constant $\mu \geq 1$ depending only on $\ell,\rho_0$,   the Sobolev embedding constants and   
 	the intial data
 	such that if
 	\begin{equation}\label{bootass}
 		\sup_{0\leq t\leq T}\abs{\vec a(t)}_{X_\rho}+ \Big(\int_0^T \abs{\vec a(t)}_{Y_\rho}^2 dt\Big)^{1/2} \leq 2 C_0 \Big(\norm{u_0}_{G^{3/2,1}_{2\rho_0,\ell}}+\norm{u_1}_{G^{3/2,1}_{2\rho_0,\ell+1}}\Big)
 	\end{equation}
 	with $\rho$ defined by \eqref{rho} and $T=\mu^{-1}$,  then
 	\begin{equation}\label{dssert}
 		\sup_{0\leq t\leq T}\abs{\vec a(t)}_{X_\rho}+ \Big(\int_0^T \abs{\vec a(t)}_{Y_\rho}^2 dt\Big)^{1/2}\leq C_0 \Big(\norm{u_0}_{G^{3/2,1}_{2\rho_0,\ell}}+\norm{u_1}_{G^{3/2,1}_{2\rho_0,\ell+1}}\Big).
 	\end{equation}
 \end{theorem}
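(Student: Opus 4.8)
The plan is to run a bootstrap (continuity) argument on the functional $\abs{\vec a(t)}_{X_\rho} + \left(\int_0^t \abs{\vec a(s)}_{Y_\rho}^2\, ds\right)^{1/2}$. Under the assumption \eqref{bootass}, the goal is to derive the improved bound \eqref{dssert} by establishing a differential inequality of the form
\begin{equation*}
\frac{d}{dt}\abs{\vec a(t)}_{X_\rho}^2 + c\,\abs{\vec a(t)}_{Y_\rho}^2 \leq C\big(1 + \abs{\vec a(t)}_{X_\rho}^2\big)\abs{\vec a(t)}_{Y_\rho}^2 + (\text{lower-order terms}),
\end{equation*}
where the crucial point is that the coefficient $\mu$ hidden in $\rho(t)=\rho_0 e^{-\mu t}$ (which forces $\dot\rho = -\mu\rho < 0$) produces, upon differentiating the weights $N_{\rho,m}$ and $H_{\rho,m,k}$ in $t$, a strongly negative ``good term'' proportional to $-\mu\,\abs{\vec a}_{Y_\rho}^2$. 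Choosing $\mu$ large (depending on $\ell,\rho_0$, the Sobolev constants, and the size of the data through the bound $2C_0(\cdots)$ in \eqref{bootass}) makes this good term dominate all the commutator and nonlinear contributions, and taking $T=\mu^{-1}$ keeps $\rho(t) \in [\rho_0/e, \rho_0]$ comparable to $\rho_0$ throughout. Integrating in time from $0$ and using \eqref{init} to bound $\abs{\vec a(0)}_{X_{\rho_0}} \leq C_0(\norm{u_0}_{G^{3/2,1}_{2\rho_0,\ell}} + \norm{u_1}_{G^{3/2,1}_{2\rho_0,\ell+1}})$ then closes the estimate, provided the constant in front of $\abs{\vec a(0)}$ has not been inflated — this is why the bootstrap radius is $2C_0$ while the conclusion recovers $C_0$.

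The main work, carried out in the later sections referenced in the excerpt (Sections \ref{sec:tange}--\ref{sec:comple}), is to estimate each of the four blocks in $\abs{\vec a}_{X_\rho}^2$ separately: the tangential block controlling $\norm{\partial_x^m \mathcal U}_{L^2}$ and $\norm{\comi y^{\ell-1}\partial_x^m\lambda}_{L^2}$ via the transport equations \eqref{mau}, \eqref{ymau}, \eqref{lateq}; and the block controlling $\norm{\comi y^\ell \partial_x^m\partial_y^k\varphi}_{L^2}$ and $\norm{\comi y^\ell \partial_x^m\partial_y^k\partial_y u}_{L^2}$ via the second equation in \eqref{neweq} together with the relation between $u$ and $\varphi$. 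For the transport-type estimates one applies $\partial_x^m$ to \eqref{lateq} and \eqref{ymau}, takes the $\comi y^{\ell-1}$-weighted $L^2$ inner product with $\partial_x^m\lambda$ (resp. $\partial_x^m\mathcal U$), and bounds the commutators $[\partial_x^m, u\partial_x + v\partial_y]$ and the forcing terms $\partial_x\varphi$, $(\partial_xu)^2$, $(\partial_y\varphi)\int_0^y\mathcal U$; the combinatorial weights \eqref{nl} are calibrated precisely so that these commutators, after summation in $m$, are controlled by $\abs{\vec a}_{X_\rho}\abs{\vec a}_{Y_\rho}$ (one derivative of the top term is ``spent'' paying for the loss, which is exactly the role of the $(m+1)$ and $(m+k+1)$ weights distinguishing $X_\rho$ from $Y_\rho$). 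For the $\varphi$/$\partial_y u$ block, one differentiates $\partial_t\varphi + \varphi - \partial_y^2 u = 0$ by $\partial_x^m\partial_y^k$, pairs with $\comi y^{2\ell}\partial_x^m\partial_y^k\varphi$, and uses $\partial_t u = \varphi - u\partial_x u - v\partial_y u$ to recover the $\partial_y u$ estimate — here one must control the $y$-derivatives carefully since the normal variable is only analytic, so the factor $\rho^{m+k}/(m+k)!$ in $H_{\rho,m,k}$ does the bookkeeping.

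The hardest step, and the conceptual novelty flagged in the abstract, is controlling the quasi-linear term $\partial_t\big((u\cdot\partial_x)u + v\partial_y u\big)$, equivalently the term $\partial_t\varphi$ appearing with a $\partial_y^2 u$ on the other side of the equation, which loses derivatives in \emph{both} the tangential and the normal direction. The delicate point is that differentiating the relation $\varphi = \partial_t u + u\partial_x u + v\partial_y u$ and the equation $\partial_t\varphi = \partial_y^2 u - \varphi$ together creates a feedback loop: $\varphi$ carries one extra $\partial_y$ relative to $u$ (hence the asymmetry $\ell$ vs $\ell-1$ and the extra $\partial_y$ on $u$ in \eqref{defX}), while $\partial_t\varphi$ brings in $\partial_y^2 u$, threatening to break the analyticity budget in $y$. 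The auxiliary function $\mathcal U$ (and the resulting $\lambda$) is introduced precisely to cancel the worst tangential contribution $-\partial_x v$ with the top $\partial_x$-derivative; what remains to be shown is that the normal loss is absorbed by the analytic weight and that, after the time-derivative estimates are coupled, all error terms are of the schematic form $\text{polynomial}(\abs{\vec a}_{X_\rho}) \cdot \abs{\vec a}_{Y_\rho}^2$, so that the $-\mu \abs{\vec a}_{Y_\rho}^2$ gain wins. The remaining details — the precise power $9$ in \eqref{nl}, the exact balance of factorials, and the Sobolev embeddings in $y$ needed to pass from $L^2$ to $L^\infty$ in the product estimates — are routine once this structure is in place, and I would organize them so that each of Sections \ref{sec:tange}--\ref{sec:comple} contributes one inequality summing to the master estimate above.
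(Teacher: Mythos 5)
Your proposal is correct and follows essentially the same route as the paper: a bootstrap argument in which Propositions \ref{prp:varu} and \ref{prp:ulam} supply the differential inequality $\frac12\frac{d}{dt}|\vec a|_{X_\rho}^2\leq (-\mu+C+C|\vec a|_{X_\rho}^4)|\vec a|_{Y_\rho}^2$, the $-\mu$ term coming from $\dot\rho=-\mu\rho$ acting on the weights is made to dominate by choosing $\mu$ large relative to the bootstrap bound $2C_0(\cdots)$, and integrating over $[0,\mu^{-1}]$ together with \eqref{init} recovers the radius $C_0$. Your structural description of the supporting estimates (transport equations for $\mathcal U,\lambda$, the coupled $\varphi$--$\partial_y u$ system, the role of the weights) also matches the paper's organization; the only cosmetic difference is that your sketched inequality has a factor $(1+|\vec a|_{X_\rho}^2)$ where the paper ends up with $(1+|\vec a|_{X_\rho}^4)$, which is immaterial to the argument.
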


The proof of Theorem \ref{thm:apriori} will be given in  Sections \ref{sec:tange}-\ref{sec:comple}. We first list some facts for later use.
In view of \eqref{nl} and \eqref{rho},  we have
\begin{equation}\label{dev:rho}
       \forall\ m,k\geq 0,\ 		\frac{d}{dt}N_{\rho,m}=-\mu (m+1) N_{\rho,m}\ \textrm{ and }\ \frac{d}{dt}H_{\rho,m,k}=-\mu (m+k+1) H_{\rho,m,k},
 \end{equation}
 and
 \begin{equation}\label{loup}
 	\forall \ 0\leq t\leq T=\mu^{-1},\quad  e^{-1}\rho_0\leq  \rho(t)\leq \rho_0.
 \end{equation}
 Similarly, if we denote
 \begin{equation}
 \label{lrho}
 L_{\rho,k}\eqdef 	H_{\rho,1,k}=\frac{\rho^{k+2}(k+2)^9}{(k+1)!},
 \end{equation}
then
\begin{equation}
	\label{devlprho}
	 \forall\ k\geq 0,\quad 		\frac{d}{dt}L_{\rho,k}=-\mu (k+2) L_{\rho,k}.
\end{equation}
 We will use   the following    Young's inequality for   discrete convolution:
 \begin{eqnarray}\label{dis}
\bigg[ \sum_{m=0}^{\infty} \Big(\sum_{j=0}^m p_jq_{m-j}\Big)^2\bigg]^{1/2}\leq \Big(\sum_{m=0}^{\infty}   q_m^2 \Big)^{1/2} \sum_{j=0}^{\infty}   p_j,
\end{eqnarray}
where    $ \set{p_{j}}_{j\geq0} $ and $\set {q_{j}}_{j\geq0} $ are positive sequences.

 \section{Tangential derivatives of $u$} \label{sec:tange}

  To simplify the notations, we will use  $C$ in Sections \ref{sec:tange}-\ref{sec:comple}    to denote a generic constant  which may vary from line to line and  depend only on  $ \ell $, $\rho_0$ and the Sobolev embedding constants, but is  independent of $\mu$ in \eqref{rho} and  the order of derivatives.

Compared with Definition \ref{degev}, the  tangential derivatives of $u$ are not specified in the    definitions \eqref{defX} and \eqref{defY} of $|\vec a|_{X_\rho}$ and $|\vec a|_{Y_\rho}$. As a preliminary step to prove Theorem \ref{thm:apriori}, we first use \eqref{ma} to control the tangential derivatives of $u$ in terms of $|\vec a|_{X_\rho} $ and $|\vec a|_{Y_\rho} $.

\begin{lemma}\label{lem:tang}
 Under the assumptions  of  Theorem \ref{thm:apriori}, we have
	\begin{align*}
		  \sum_{ m=0}^{\infty}   N_{\rho, m}  ^2   \norm{  \comi y^{\ell-1} \partial_x^m u}_{L^2}^2 \leq    C\big(1+ |\vec a|_{X_{\rho}}^2\big) |\vec a|_{X_{\rho}}^2,
	\end{align*}
	and
	\begin{align*}
		  \sum_{ m=0}^{\infty}  (m+1) N_{\rho, m}  ^2   \norm{  \comi y^{\ell-1} \partial_x^m u}_{L^2}^2 \leq    C  \big(1+ |\vec a|_{X_{\rho}}^2\big) |\vec a|_{Y_{\rho}}^2.
	\end{align*}	
	Recall that $N_{\rho,m}$ is defined by \eqref{nl}.
\end{lemma}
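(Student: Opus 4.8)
The plan is to bound the tangential derivatives $\partial_x^m u$ by integrating the defining relation for $\lambda$ in \eqref{ma}, namely $\partial_x u = \lambda + (\partial_y u)\int_0^y \mathcal U\,d\tilde y$, so that for $m\geq 1$ we have $\partial_x^m u = \partial_x^{m-1}\lambda + \partial_x^{m-1}\big[(\partial_y u)\int_0^y\mathcal U\,d\tilde y\big]$, while the $m=0$ term $\norm{\comi y^{\ell-1} u}_{L^2}$ is controlled directly by $\norm{\comi y^\ell \partial_y u}_{L^2}$ via the Hardy-type inequality (using $u|_{y=0}=0$ and $\ell\geq 2$), which in turn is a summand of $|\vec a|_{X_\rho}^2$ (with weight $(m+1)^2 H_{\rho,1,0}^2$ at $m=k=0$). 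The first summand $\partial_x^{m-1}\lambda$ is immediately controlled: $\sum_m N_{\rho,m}^2\norm{\comi y^{\ell-1}\partial_x^{m-1}\lambda}_{L^2}^2$ matches the $\lambda$-part of $|\vec a|_{X_\rho}^2$ after checking the index shift $N_{\rho,m}^2 \lesssim (m)\,N_{\rho,m}^2 \lesssim \ldots$ against $(m+1)N_{\rho,m+1}^2$, which holds because the ratio $N_{\rho,m}/N_{\rho,m-1}$ is comparable to $\rho/ m^{1/2}$ up to polynomial factors, and $\rho\leq\rho_0$ keeps everything bounded; the second, more delicate, estimate with the extra weight $(m+1)$ lands in the $Y_\rho$-norm for the same reason.

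The bulk of the work is the product term $\partial_x^{m-1}\big[(\partial_y u)\int_0^y\mathcal U\,d\tilde y\big]$. First I would expand by the Leibniz rule into $\sum_{j=0}^{m-1}\binom{m-1}{j}(\partial_x^j\partial_y u)\big(\int_0^y\partial_x^{m-1-j}\mathcal U\,d\tilde y\big)$, then split the sum into ``low-high'' and ``high-low'' pieces according to whether $j\leq (m-1)/2$ or not. For the factor with few $x$-derivatives I would use the one-dimensional Sobolev embedding $L^\infty_x \hookrightarrow H^1_x$ (this is where the Sobolev embedding constants enter) to pull it out in $L^\infty_{x}L^2_y$ or $L^\infty$, paying two extra tangential derivatives, and keep the high-derivative factor in $L^2$; the weights $\comi y^{\ell-1}$ and $\comi y^\ell$ are arranged so that $\comi y^{\ell-1}\cdot\comi y^{-1}$ from $\int_0^y\mathcal U$ (another Hardy step, using $\int_0^y\comi{\tilde y}^{-\ell}d\tilde y\lesssim \comi y^{1-\ell}$ when $\ell\geq 2$) absorbs correctly against the $\comi y^\ell\partial_y u$ factor. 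The resulting discrete convolution is then summed using Young's inequality \eqref{dis}: writing the combinatorial and coefficient factors as $p_j$ (the low-derivative sequence, which must be summable — this forces the $(m!)^{1/2}$ and the power $(m+1)^9$ in the definition \eqref{nl}, giving a convergent $\sum_j p_j \lesssim \rho^{?}$ bounded uniformly for $\rho\leq\rho_0$) and $q_{m-1-j}$ (the high-derivative sequence whose $\ell^2$ norm is part of $|\vec a|_{X_\rho}$ or $|\vec a|_{Y_\rho}$).

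The key coefficient bookkeeping — verifying that $N_{\rho,m}\binom{m-1}{j}$ divided by $N_{\rho,j+\text{(shift)}}\cdot H_{\rho,(m-1-j)+1,0}$ (or the analogous ratio) is bounded by a summable-in-$j$ sequence independent of $m$ — is the heart of the matter and the main obstacle; everything hinges on the precise algebra $H_{\rho,m,k}=\rho^{m+k+1}(m+k+1)^9/((m+k)!(m!)^{1/2})$, in particular the fact that the Gevrey-$3/2$ weight $(m!)^{-3/2}$ in $N_{\rho,m}$ factors as $(m!)^{-1}\cdot(m!)^{-1/2}$ so that the ``combinatorial'' part $(m!)^{-1}$ handles the binomial coefficient via $\binom{m-1}{j}\leq \binom{m}{j}=m!/(j!(m-j)!)$ and the leftover $(m!)^{-1/2}$ distributes as $(j!)^{-1/2}((m-j)!)^{-1/2}$ up to a bounded factor. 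The factor-of-two gap in the initial radius ($u_0\in G^{3/2,1}_{2\rho_0,\ell}$) is precisely what gives room for the geometric losses $\rho^{\pm}$ accumulated in these estimates. Once these sequence inequalities are in hand, the two claimed bounds follow by squaring, summing in $m$, applying \eqref{dis}, and recognizing the right-hand sides as $|\vec a|_{X_\rho}^2$ times $\big(1+|\vec a|_{X_\rho}^2\big)$ (the extra factor coming from the $L^\infty$-bound on the low-derivative factor, itself controlled by $|\vec a|_{X_\rho}$) respectively $|\vec a|_{Y_\rho}^2$ times $\big(1+|\vec a|_{X_\rho}^2\big)$ for the version with the extra weight $(m+1)$.
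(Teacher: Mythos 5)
Your proposal follows essentially the same route as the paper: differentiate the defining relation $\partial_x u = \lambda + (\partial_y u)\int_0^y\mathcal U\,d\tilde y$, Leibniz-expand the product, split the $j$-sum at $[m/2]$, put the low-derivative factor in $L^\infty$ (via Sobolev) and the high-derivative factor in $L^2$, apply Young's inequality \eqref{dis}, and handle $m=0$ separately by Hardy; the coefficient inequalities you flag as "the heart of the matter" are exactly \eqref{fe1}--\eqref{fe2} in the paper, and your observation about factoring $(m!)^{-3/2}=(m!)^{-1}\cdot(m!)^{-1/2}$ is the correct mechanism behind them. The one wobble is your weight bookkeeping for $\int_0^y\mathcal U\,d\tilde y$: since $\mathcal U$ carries no $\comi y$-weight in $X_\rho$, the Hardy step $\int_0^y\comi{\tilde y}^{-\ell}d\tilde y\lesssim\comi y^{1-\ell}$ is not the right tool here; the paper instead uses the elementary Cauchy--Schwarz bound $\bigl|\int_0^y\partial_x^{m-j}\mathcal U\,d\tilde y\bigr|\leq\comi y^{1/2}\norm{\partial_x^{m-j}\mathcal U}_{L_y^2}$, which together with $\comi y^{\ell-1}\cdot\comi y^{1/2}\leq\comi y^{\ell}$ produces exactly the weight $\comi y^\ell$ on $\partial_x^j\partial_y u$ that appears in $|\vec a|_{X_\rho}$.
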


\begin{proof}We first prove the first statement.
	In view of  \eqref{ma},  we have
	\begin{align*}
		\partial_x^{m+1} u=\partial_x^m\lambda+ \sum_{j=0}^m {m\choose j}(\partial_x^j\partial_yu)\int_0^y\partial_x^{m-j}\mathcal U(t,x,\tilde y)  d\tilde y.	
\end{align*}
Thus, we first multiply both sides of the above equation by $\comi y^{\ell-1}$ and then take the $L^2$-product with $\comi y^{\ell-1}\partial_x^{m+1}u$. Then this with the fact that
\begin{align*}
\forall\ (x,y)\in   \mathbb R_+^2,\quad 	\Big| \int_0^y\partial_x^{m-j}\mathcal U(t,x,\tilde y)  d\tilde y \Big| \leq  \comi y^{1/2} \norm{\partial_x^{m-j}\mathcal U(x,\cdot)}_{L_y^2}
\end{align*}
 implies
\begin{equation}\label{i1i2}
\sum_{ m=0}^{\infty}   N_{\rho, m+1}  ^2  \norm{  \comi y^{\ell-1} \partial_x^{m+1}u}_{L^2}^2\leq    I_1+I_2,
\end{equation}
where 
\begin{equation}\label{i1}
	I_1=\sum_{ m=0}^{+\infty}   N_{\rho, m+1}  ^2    \norm{  \comi y^{\ell-1} \partial_x^{m}\lambda}_{L^2} ^2 \leq |\vec a|_{X_\rho}^2
\end{equation}
due to \eqref{defX}, and
\begin{multline}\label{i2}
	I_2= 2\sum_{ m=0}^{+\infty} \bigg[\sum_{j=0}^{[m/2]}  {m\choose j} N_{\rho, m+1}    \norm{\comi y^{\ell} \partial_x^j\partial_yu}_{L_x^\infty L_y^2} \norm{\partial_x^{m-j}\mathcal U}_{L^2} \bigg]^2\\
	+2\sum_{m=0}^{+\infty} \bigg[\sum_{j= [m/2]+1}^m  {m\choose j} N_{\rho, m+1}    \norm{\comi y^{\ell} \partial_x^j\partial_yu}_{L^2} \norm{\partial_x^{m-j}\mathcal U}_{L_x^\infty L_y^2} \bigg]^2.
\end{multline}
As usual  $[m/2]$ represents the largest integer less than or equal to $ m/2$.
To estimate $I_2$, we first write
\begin{align*}
	&\sum_{j=0}^{[m/2]}  {m\choose j} N_{\rho, m+1}    \norm{\comi y^{\ell} \partial_x^j\partial_yu}_{L_x^\infty L_y^2} \norm{\partial_x^{m-j}\mathcal U}_{L^2} \\
	&\qquad=\sum_{j=0}^{[m/2]} \frac{m!}{j!(m-j)!} \frac{N_{\rho, m+1}}{N_{\rho, j+3} N_{\rho,m-j+1} }  \\
	&\qquad\qquad\qquad\qquad\times \inner{ N_{\rho, j+3} \norm{\comi y^{\ell} \partial_x^j\partial_yu}_{L_x^\infty L_y^2} } \inner{ N_{\rho,m-j+1} \norm{\partial_x^{m-j}\mathcal U}_{L^2}}.
\end{align*}
By using  the fact that (see Appendix \ref{sec:app} for its proof)
\begin{equation}\label{fe1}
	\forall\  j\leq [m/2],\quad  \frac{m!}{j!(m-j)!} \frac{N_{\rho, m+1}}{N_{\rho, j+3} N_{\rho,m-j+1} }  \leq \frac{C}{j+1}
\end{equation}
and the  Young's inequality \eqref{dis}   for   discrete convolution,  we obtain
\begin{equation}\label{YE}
\begin{aligned}
	&\sum_{j=0}^{m}  \bigg[\sum_{j=0}^{[m/2]}  {m\choose j} N_{\rho, m+1}    \norm{\comi y^{\ell} \partial_x^j\partial_yu}_{L_x^\infty L_y^2} \norm{\partial_x^{m-j}\mathcal U}_{L^2}\bigg]^2\\
	&\leq C \sum_{j=0}^{m}  \bigg[ \sum_{j=0}^{[m/2]}  \frac{ N_{\rho, j+3} \norm{\comi y^{\ell} \partial_x^j\partial_yu}_{L_x^\infty L_y^2} } {j+1}\inner{ N_{\rho,m-j+1} \norm{\partial_x^{m-j}\mathcal U}_{L^2}}\bigg]^2\\
	& \leq C \Big(\sum_{j=0}^{+\infty} \frac{ N_{\rho, j+3} \norm{\comi y^{\ell} \partial_x^j\partial_yu}_{L_x^\infty L_y^2} }{j+1}\Big)^2\sum_{j=0}^{+\infty}   N_{\rho,j+1}^2 \norm{\partial_x^{j}\mathcal U}_{L^2}^2\\
	& \leq C \Big(\sum_{j=1}^{+\infty} j^{-2}  \Big)  \Big(\sum_{j=0}^{+\infty}  N_{\rho, j+3} ^2\norm{\comi y^{\ell} \partial_x^j\partial_yu}_{L_x^\infty L_y^2} ^2\Big)|\vec a|_{X_\rho}^2\leq C|\vec a|_{X_\rho}^4,
\end{aligned}
\end{equation}
where in the last inequality we have  used the Sobolev embedding and  \eqref{defX}. 
 
 Similarly,  by using the fact that (see Appendix \ref{sec:app} for its proof)
\begin{equation}\label{fe2}
	\forall\   [m/2]+1\leq j\leq m, \quad  \frac{m!}{j!(m-j)!} \frac{N_{\rho, m+1}}{N_{\rho,j+1} N_{\rho,m-j+3} }  \leq \frac{C}{m-j+1},
\end{equation}
we have
\begin{equation*}
\begin{aligned}
	&\sum_{m=0}^{+\infty} \bigg[\sum_{j= [m/2]+1}^m  {m\choose j} N_{\rho, m+1}    \norm{\comi y^{\ell} \partial_x^j\partial_yu}_{L^2} \norm{\partial_x^{m-j}\mathcal U}_{L_x^\infty L_y^2} \bigg]^2\\
	&\leq \sum_{m=0}^{+\infty} \bigg[\sum_{j> [m/2]}^m    \big (N_{\rho, j+1}    \norm{\comi y^{\ell} \partial_x^j\partial_yu}_{L^2}\big)  \frac{N_{\rho,m-j+3}\norm{\partial_x^{m-j}\mathcal U}_{L_x^\infty L_y^2}}{m-j+1} \bigg]^2  \leq  C|\vec a|_{X_\rho}^4.
	\end{aligned}
\end{equation*}
This together with \eqref{YE} and \eqref{i2}  yields
\begin{equation}\label{pri2}
	I_2\leq C|\vec a|_{X_\rho}^4.
\end{equation}
Thus, by substituting the above estimate and \eqref{i1} into \eqref{i1i2}, we get
\begin{align*}
		\sum_{m=0}^{+\infty} N_{\rho, m}  ^2  \norm{  \comi y^{\ell-1} \partial_x^{m}u}_{L^2}^2&= N_{\rho, 0}  ^2  \norm{  \comi y^{\ell-1} u}_{L^2}^2+	\sum_{m=0}^{+\infty} N_{\rho, m+1}  ^2  \norm{  \comi y^{\ell-1} \partial_x^{m+1}u}_{L^2}^2\\
		&\leq N_{\rho, 0}  ^2  \norm{  \comi y^{\ell-1} u}_{L^2}^2+C \inner{ |\vec a|_{X_{\rho}}^2+|\vec a|_{X_{\rho}}^4}.
\end{align*}
 On the other hand, by Hardy's inequality and \eqref{loup}, we have 
 \begin{align*}
 	N_{\rho,0}\norm{\comi y^{\ell-1}u}_{L^2}\leq C \frac{N_{\rho,0}}{N_{\rho, 1}}N_{\rho, 1}\norm{\comi y^{\ell}\partial_y u}_{L^2}\leq   \frac{C}{\rho}N_{\rho, 1}\norm{\comi y^{\ell}\partial_y u}_{L^2} \leq C |\vec a|_{X_\rho}.
 \end{align*}
Thus, the first statement in the lemma  follows.

The second statement can be proved similarly. In fact,  firstly we have
\begin{align*}
		 \sum_{m=0}^{+\infty} (m+1)N_{\rho, m}  ^2  \norm{  \comi y^{\ell-1} \partial_x^{m}u}_{L^2}^2 \leq C |\vec a|_{X_\rho}^2+	\sum_{m=1}^{+\infty} (m+1)N_{\rho, m}  ^2  \norm{  \comi y^{\ell-1} \partial_x^{m}u}_{L^2}^2.
\end{align*}
Similar to the estimates \eqref{i1i2} and \eqref{i1}, by using  the fact that $ m+1 \leq C (m-j)$ for $j\leq [m/2]$ and  $ m+1 \leq C j$ for $  [m/2]+1\leq j\leq m,$ we have by  following  the argument for \eqref{pri2} that
\begin{align*}
		& \sum_{m=1}^{+\infty} (m+1)N_{\rho, m}  ^2  \norm{  \comi y^{\ell-1} \partial_x^{m}u}_{L^2}^2\leq C \sum_{m=0}^{+\infty} (m+1)N_{\rho, m+1}  ^2  \norm{  \comi y^{\ell-1} \partial_x^{m+1}u}_{L^2}^2\\
		&\leq  C\sum_{ m=0}^{+\infty} \bigg[\sum_{j=0}^{[m/2]}  {m\choose j} N_{\rho, m+1}    \norm{\comi y^{\ell} \partial_x^j\partial_yu}_{L_x^\infty L_y^2} (m-j+1)^{1\over2}\norm{\partial_x^{m-j}\mathcal U}_{L^2} \bigg]^2\\
	&\quad+C\sum_{m=0}^{+\infty} \bigg[\sum_{j= [m/2]+1}^m  {m\choose j} N_{\rho, m+1} (j+1)^{1/2}   \norm{\comi y^{\ell} \partial_x^j\partial_yu}_{L^2} \norm{\partial_x^{m-j}\mathcal U}_{L_x^\infty L_y^2} \bigg]^2\\
	&\leq  C |\vec a|_{X_\rho}^2  |\vec a|_{Y_\rho}^2.\end{align*}
Combining the above estimates gives
\begin{align*}
	\sum_{m=0}^{+\infty} (m+1)N_{\rho, m}  ^2  \norm{  \comi y^{\ell-1} \partial_x^{m}u}_{L^2}^2\leq C \inner{1+|\vec a|_{X_\rho}^2} |\vec a|_{Y_\rho}^2.
\end{align*}
 Then the proof of the lemma  is completed.
\end{proof}

As a direct consequence of the above lemma, we have the following corollary by using
\begin{align*}
	\norm{  \partial_x^m  v}_{L_x^2L_y^\infty} \leq  C \norm{ \comi y^{\ell-1} \partial_x^{m+1} u}_{L^2}.
\end{align*}

\begin{corollary}\label{cor:v}
Under the assumptions of  Theorem  \ref{thm:apriori}, we have
\begin{align*}
		  \sum_{ m=0}^{\infty}   N_{\rho, m+1}  ^2   \norm{  \partial_x^m  v}_{L_x^2L_y^\infty}^2 \leq    C\big(1+ |\vec a|_{X_{\rho}}^2\big) |\vec a|_{X_{\rho}}^2,
	\end{align*}
	and
	\begin{align*}
		  \sum_{ m=0}^{\infty}  (m+1) N_{\rho, m+1}  ^2   \norm{   \partial_x^m v}_{L_x^2L_y^\infty}^2 \leq    C\big(1+ |\vec a|_{X_{\rho}}^2\big) |\vec a|_{Y_{\rho}}^2.
	\end{align*}	
\end{corollary}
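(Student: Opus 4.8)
The plan is to reduce the corollary entirely to Lemma \ref{lem:tang} via the pointwise bound on $v$ quoted immediately before the statement, which is itself an elementary one-dimensional estimate. First I would establish that inequality: since $v(t,x,y)=-\int_0^y\partial_xu(t,x,\tilde y)\,d\tilde y$ by \eqref{varp}, applying $\partial_x^m$ gives $\partial_x^m v(t,x,y)=-\int_0^y\partial_x^{m+1}u(t,x,\tilde y)\,d\tilde y$, whence by Cauchy--Schwarz in $\tilde y$,
\[
\abs{\partial_x^m v(t,x,y)}\leq\int_0^{+\infty}\abs{\partial_x^{m+1}u(t,x,\tilde y)}\,d\tilde y
\leq\Big(\int_0^{+\infty}\comi{\tilde y}^{-2(\ell-1)}\,d\tilde y\Big)^{1/2}\norm{\comi y^{\ell-1}\partial_x^{m+1}u(t,x,\cdot)}_{L_y^2}.
\]
The first factor on the right is finite precisely because $\ell\geq2$ makes $2(\ell-1)>1$, and the bound is uniform in $y$; taking the supremum over $y$ and then the $L_x^2$-norm yields $\norm{\partial_x^m v}_{L_x^2L_y^\infty}\leq C\norm{\comi y^{\ell-1}\partial_x^{m+1}u}_{L^2}$.

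Next I would multiply this inequality by $N_{\rho,m+1}^2$ and sum over $m\geq0$. After reindexing,
\[
\sum_{m=0}^{\infty}N_{\rho,m+1}^2\norm{\partial_x^m v}_{L_x^2L_y^\infty}^2
\leq C\sum_{m=0}^{\infty}N_{\rho,m+1}^2\norm{\comi y^{\ell-1}\partial_x^{m+1}u}_{L^2}^2
\leq C\sum_{m=0}^{\infty}N_{\rho,m}^2\norm{\comi y^{\ell-1}\partial_x^{m}u}_{L^2}^2,
\]
and the last sum is bounded by $C\big(1+|\vec a|_{X_\rho}^2\big)|\vec a|_{X_\rho}^2$ by the first estimate of Lemma \ref{lem:tang}; this gives the first inequality of the corollary. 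For the second, the same steps carrying the extra weight $(m+1)$ give
\[
\sum_{m=0}^{\infty}(m+1)N_{\rho,m+1}^2\norm{\partial_x^m v}_{L_x^2L_y^\infty}^2
\leq C\sum_{m=0}^{\infty}(m+1)N_{\rho,m}^2\norm{\comi y^{\ell-1}\partial_x^{m}u}_{L^2}^2,
\]
which the second estimate of Lemma \ref{lem:tang} bounds by $C\big(1+|\vec a|_{X_\rho}^2\big)|\vec a|_{Y_\rho}^2$.

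I do not expect any genuine obstacle here: the corollary is a formal consequence of Lemma \ref{lem:tang} combined with the elementary trace-type bound above, and the only point requiring (minimal) attention is the integrability of $\comi{\tilde y}^{-(\ell-1)}$ on $(0,+\infty)$, which is exactly why the weight exponent in the $u$-norm is taken to be $\ell-1$ and why the standing assumption $\ell\geq2$ is imposed.
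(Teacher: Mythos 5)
Your proposal is correct and matches the paper's intended argument: derive $\norm{\partial_x^m v}_{L_x^2L_y^\infty}\leq C\norm{\comi y^{\ell-1}\partial_x^{m+1}u}_{L^2}$ from $v=-\int_0^y\partial_x u\,d\tilde y$ via Cauchy--Schwarz (using $\ell\geq2$ for integrability of $\comi{\tilde y}^{-2(\ell-1)}$), then shift the index and invoke Lemma~\ref{lem:tang}. The reindexing and the use of the two parts of Lemma~\ref{lem:tang} for the two inequalities are exactly right.
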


   \section{Gevrey norm of $\varphi$ and $\partial_yu$}\label{sec:var}

   In this section,  we   will study  the last term in definition \eqref{defX} of $|\vec a|_{X_\rho}$  involving  the mixed derivatives of $\varphi$ and $\partial_yu$.  The estimate is stated in the following proposition. 

   \begin{proposition}\label{prp:varu}
   Under the assumptions of  Theorem \ref{thm:apriori}, we have
   	\begin{align*}
	&\frac12\frac{d}{dt} \sum_{m=0}^{+\infty}\sum_{k=0}^{+\infty} (m+1)^2 H_{\rho,m+1,k}^2  \Big( \norm{ \comi y^\ell  \partial_x^m \partial_y^k  \varphi }_{L^2}^2+   \norm{\comi y^\ell  \partial_x^m \partial_y^{k} \partial_y u }_{L^2}^2\Big) \\
	 &\leq C  \big(1+|\vec a|_{X_\rho}^4 \big) |\vec a|_{Y_\rho}^2-\mu \sum_{m,k\geq 0}  \inner{m+k+1}(m+1)^2 H_{\rho,m+1,k}^2\\
	 & \qquad\qquad\qquad\qquad\qquad\quad \qquad\qquad \times \Big ( \norm{ \comi y^\ell  \partial_x^m \partial_y^k \varphi }_{L^2}^2+   \norm{\comi y^\ell  \partial_x^m\partial_y^{k} \partial_y u }_{L^2}^2\Big  ).\end{align*}
	     \end{proposition}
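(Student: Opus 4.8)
The plan is to derive the estimate by weighted $L^2$ energy identities for the equations obtained by applying $\partial_x^m\partial_y^k$ to the two equations of \eqref{neweq} and summing against the weights $(m+1)^2H_{\rho,m+1,k}^2$. As a preliminary, I would differentiate the first equation in \eqref{neweq} in $y$: using $\partial_yv=-\partial_xu$, the term $(\partial_yu)(\partial_xu)$ coming from $\partial_y(u\partial_xu)$ cancels $(\partial_yv)(\partial_yu)$ coming from $\partial_y(v\partial_yu)$, leaving the transport equation
\[
\inner{\partial_t+u\partial_x+v\partial_y}\partial_yu=\partial_y\varphi .
\]
Write $\varphi_{m,k}=\partial_x^m\partial_y^k\varphi$ and $g_{m,k}=\partial_x^m\partial_y^k\partial_yu$. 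Since the second equation of \eqref{neweq} is linear in $(u,\varphi)$, applying $\partial_x^m\partial_y^k$ yields $\partial_t\varphi_{m,k}+\varphi_{m,k}=\partial_yg_{m,k}$ with no commutator, while the transport equation above gives $\inner{\partial_t+u\partial_x+v\partial_y}g_{m,k}=\partial_y\varphi_{m,k}-\com{\partial_x^m\partial_y^k,\ u\partial_x+v\partial_y}\partial_yu$. Taking $L^2$-inner products with $\comi y^{2\ell}\varphi_{m,k}$ and $\comi y^{2\ell}g_{m,k}$ respectively, adding, multiplying by $(m+1)^2H_{\rho,m+1,k}^2$ and summing over $m,k$ produces the left-hand side of the Proposition; the damping term $-\mu\sum(m+k+1)(m+1)^2H_{\rho,m+1,k}^2(\cdots)$ on the right comes, with room to spare, from differentiating the weights via \eqref{dev:rho}.

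The crucial structural point is a cancellation between the two source terms $\partial_yg_{m,k}$ and $\partial_y\varphi_{m,k}$. After summation, their contributions $\inner{\comi y^{2\ell}\partial_yg_{m,k},\ \varphi_{m,k}}_{L^2}+\inner{\comi y^{2\ell}\partial_y\varphi_{m,k},\ g_{m,k}}_{L^2}$ equal $\int\comi y^{2\ell}\partial_y\inner{g_{m,k}\varphi_{m,k}}$, which after one integration by parts in $y$ is only the lower-order weight commutator $-\int\inner{\partial_y\comi y^{2\ell}}g_{m,k}\varphi_{m,k}$ plus the boundary trace $-\inner{g_{m,k}\varphi_{m,k}}\big|_{y=0}$. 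Likewise the convection terms in the $g_{m,k}$-equation, after integrating by parts and using the Prandtl cancellation $\partial_xu+\partial_yv=0$, leave only the weight commutator $\frac12\int\inner{\partial_y\comi y^{2\ell}}v\,g_{m,k}^2$, with no boundary term because $v|_{y=0}=0$. Thus, up to commutators and this single boundary trace, the combined energy $\norm{\comi y^\ell\varphi_{m,k}}_{L^2}^2+\norm{\comi y^\ell g_{m,k}}_{L^2}^2$ is monotone, which is the mechanism replacing the missing parabolic smoothing in the quasi-linear setting.

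It then remains to absorb the remaining terms into $C(1+|\vec a|_{X_\rho}^4)|\vec a|_{Y_\rho}^2$. The weight commutators are routine: $\partial_y\comi y^{2\ell}\lesssim\comi y^{2\ell-1}$ and $|v|\lesssim\comi y$ (from $v=-\int_0^y\partial_xu$, Hardy's inequality and Corollary \ref{cor:v}), so they are dominated by $\comi y^\ell$-weighted $L^2$-norms already present in $|\vec a|_{Y_\rho}$. For the derivative commutator $\com{\partial_x^m\partial_y^k,\,u\partial_x+v\partial_y}\partial_yu$, I would expand by the Leibniz rule, place the low-order factor in $L_x^\infty L_y^2$ via Sobolev embedding in $x$ and the high-order factor in $L^2$, and sum by the discrete Young inequality \eqref{dis}, the binomial-times-weight ratios being controlled exactly as in \eqref{fe1}--\eqref{fe2}; the extra tangential derivative generated by each commutator term is paid for by the gaps in $\rho^m$ and $(m!)^{1/2}$ inside $H_{\rho,m,k}$ together with the factor $m+k+1$ separating $|\vec a|_{Y_\rho}$ from $|\vec a|_{X_\rho}$, which is precisely why Lemma \ref{lem:tang} and Corollary \ref{cor:v} are established first.

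The step I expect to be the main obstacle is twofold. First, the commutator term in which all tangential derivatives fall on the coefficient, namely $(\partial_x^mv)\,\partial_y^{k+2}u$, is genuinely borderline: here $\partial_x^mv=-\int_0^y\partial_x^{m+1}u\,d\tilde y$ carries $m+1$ tangential derivatives, one more than is seen by $\norm{\comi y^\ell\partial_x^m\partial_y^k\partial_yu}_{L^2}$. The resolution is the auxiliary functions: by \eqref{ma} one has $\partial_x^{m+1}u=\partial_x^m\lambda+\partial_x^m\inner{(\partial_yu)\int_0^y\mathcal U\,d\tilde y}$, so this term is re-expressed through $\partial_x^m\lambda$ — controlled, with the $\comi y^{\ell-1}$ weight, by $|\vec a|_{X_\rho}$ and $|\vec a|_{Y_\rho}$ — plus genuinely lower-order pieces. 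Second, one must dispose of the boundary trace $\inner{g_{m,k}\varphi_{m,k}}|_{y=0}$: it vanishes for $k=0$ because $\varphi|_{y=0}=0$, and for $k=1$ because evaluating $\partial_t\varphi+\varphi=\partial_y^2u$ at $y=0$ forces $\partial_y^2u|_{y=0}=0$; for $k\ge2$ it is the delicate point, since a crude trace bound produces a quantity that is not controlled by $|\vec a|_{Y_\rho}^2$ alone, and one must exploit the algebra of the boundary traces (for instance $g_{m,k+1}|_{y=0}=(\partial_t+1)\varphi_{m,k}|_{y=0}$, obtained by differentiating the $\varphi$-equation and restricting to $y=0$), possibly a boundary correction added to the energy, the precise exponents in $H_{\rho,m,k}$, and the largeness of $\mu$ — which is ultimately what dictates the choices $\mu\gg1$ and $T=\mu^{-1}$.
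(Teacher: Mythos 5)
Your overall plan is the paper's: reformulate via $\varphi$, apply $\partial_x^m\partial_y^{k+1}$ and $\partial_x^m\partial_y^k$ to the two equations of \eqref{neweq}, take $\comi y^\ell$-weighted $L^2$-products, exploit the integration-by-parts cancellation between $\inner{\comi y^\ell\partial_x^m\partial_y^{k+2}u,\ \comi y^\ell\partial_x^m\partial_y^k\varphi}$ and $\inner{\comi y^\ell\partial_x^m\partial_y^{k+1}\varphi,\ \comi y^\ell\partial_x^m\partial_y^{k+1}u}$, absorb the $-\mu(m+k+1)$ damping from \eqref{dev:rho}, and close the commutator via the discrete Young inequality \eqref{dis} together with the auxiliary-function estimates of Lemma \ref{lem:tang} and Corollary \ref{cor:v} for the borderline term $(\partial_x^m v)\,\partial_y^{k+2}u$. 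That is exactly what the paper does through \eqref{taestimate}--\eqref{laetimate}, so the structural ideas are right.

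The one place you go astray is the boundary trace. You assert that for $k\geq 2$ a crude bound on $\int\big[\partial_x^m\partial_y^{k+1}u\cdot\partial_x^m\partial_y^k\varphi\big]\big|_{y=0}\,dx$ is not controlled by $|\vec a|_{Y_\rho}^2$ and that one must exploit the algebra of boundary traces, or add a corrector to the energy, or rely on largeness of $\mu$. This is not so. The elementary trace estimate $\norm{f(\cdot,0)}_{L^2_x}^2\leq 2\norm{f}_{L^2}\norm{\partial_yf}_{L^2}$ applied to both factors gives
\begin{align*}
\int\big|\partial_x^m\partial_y^{k+1}u\cdot\partial_x^m\partial_y^k\varphi\big|\big|_{y=0}\,dx
\leq 2\Big(\norm{\partial_x^m\partial_y^{k+1}u}_{L^2}\norm{\partial_x^m\partial_y^{k+2}u}_{L^2}\norm{\partial_x^m\partial_y^{k}\varphi}_{L^2}\norm{\partial_x^m\partial_y^{k+1}\varphi}_{L^2}\Big)^{1/2},
\end{align*}
and after multiplying by $(m+1)^2H_{\rho,m+1,k}^2$ the one-step shift in $k$ costs only the ratio $H_{\rho,m+1,k}/H_{\rho,m+1,k+1}=(m+k+2)/\rho\leq C(m+k+1)$, which is exactly the extra factor separating $|\vec a|_{Y_\rho}^2$ from $|\vec a|_{X_\rho}^2$ in \eqref{defY}. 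The paper's Lemma \ref{lem:nor} bounds the analogous boundary sum directly by $C|\vec a|_{Y_\rho}^2$ with precisely this argument (using $L_{\rho,k}/L_{\rho,k+1}\leq C(k+1)$), and Proposition \ref{prp:varu} uses the same mechanism; no boundary-trace algebra, energy corrector, or special use of $\mu$ is needed for this term. So your identification of a genuine obstacle here is a misconception, even though the rest of the proposal matches the paper's proof.
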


To have a clear presentation,  we first deal with the  normal and tangential derivatives in Subsections \ref{subsec:nor} and \ref{subsec:tan} respectively. The estimate on the  mixed derivatives will then be  presented in the last subsection.

   \subsection{Normal derivatives}\label{subsec:nor}  We first prove the following estimate  on the normal derivatives of $\varphi$ and $\partial_yu$.

   \begin{lemma}\label{lem:nor}
   Under the assumptions of  Theorem \ref{thm:apriori}, we have
   	\begin{align*}
&	\frac{1}{2}\frac{d}{dt} \sum_{k=0}^{+\infty}L_{\rho , k}^2 \inner{\norm{\comi y^\ell  \partial_y^{k}\varphi}_{L^2}^2+\norm{\comi y^\ell  \partial_y^{k}\partial_y u}_{L^2}^2} \\
	  &\leq C\big( 1+  |\vec a|_{X_{\rho}}^2\big)|\vec a|_{Y_{\rho}}^2-\mu \sum_{k=0}^{+\infty} \inner{k+1} L_{\rho , k}^2 \inner{\norm{\comi y^\ell  \partial_y^{k}\varphi}_{L^2}^2+\norm{\comi y^\ell  \partial_y^{k}\partial_y u }_{L^2}^2}
	  ,
\end{align*}
where $L_{\rho,k}$ is defined by \eqref{lrho}, that is,
\begin{equation*}
	L_{\rho,k}\eqdef H_{\rho,1,k}=\frac{\rho^{k+2}(k+2)^9}{(k+1)!}.
\end{equation*}
   \end{lemma}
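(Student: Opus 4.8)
The plan is to treat the second equation of \eqref{neweq} as a wave-type coupling between $\varphi$ and $w\eqdef\partial_yu$. First I would record the equation for $w$: differentiating the first equation of \eqref{neweq} in $y$ and using the divergence-free relation $\partial_yv=-\partial_xu$, the two terms $(\partial_yu)\,\partial_xu$ cancel, so that
\[ \partial_tw+u\partial_xw+v\partial_yw=\partial_y\varphi,\qquad \partial_t\varphi+\varphi=\partial_y^2u=\partial_yw, \]
the second identity being just the second equation of \eqref{neweq}. Applying $\partial_y^k$ gives the coupled pair
\[ \partial_t\partial_y^k\varphi+\partial_y^k\varphi=\partial_y^{k+1}w,\qquad \partial_t\partial_y^kw+\partial_y^k(u\partial_xw)+\partial_y^k(v\partial_yw)=\partial_y^{k+1}\varphi. \]

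I would then run the weighted energy estimate: take the $L^2$-inner product of the first of these with $\comi y^{2\ell}\partial_y^k\varphi$ and of the second with $\comi y^{2\ell}\partial_y^kw$, add, multiply by $L_{\rho,k}^2$ and sum over $k\ge0$. By \eqref{devlprho}, the time derivative of $L_{\rho,k}^2$ produces on the left-hand side the damping $\mu\sum_k(k+2)L_{\rho,k}^2\big(\norm{\comi y^\ell\partial_y^k\varphi}_{L^2}^2+\norm{\comi y^\ell\partial_y^kw}_{L^2}^2\big)$, which dominates the claimed $\mu\sum_k(k+1)L_{\rho,k}^2(\cdots)$, while the term $+\varphi$ yields the nonnegative quantity $\sum_kL_{\rho,k}^2\norm{\comi y^\ell\partial_y^k\varphi}_{L^2}^2$, which I simply drop. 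Modulo these, proving the lemma reduces to bounding, by $C(1+|\vec a|_{X_\rho}^2)|\vec a|_{Y_\rho}^2$, the coupling remainder $\mathcal R_1\eqdef\sum_kL_{\rho,k}^2\big[\inner{\partial_y^{k+1}w,\comi y^{2\ell}\partial_y^k\varphi}_{L^2}+\inner{\partial_y^{k+1}\varphi,\comi y^{2\ell}\partial_y^kw}_{L^2}\big]$ and the transport remainder $\mathcal R_2\eqdef-\sum_kL_{\rho,k}^2\inner{\partial_y^k(u\partial_xw)+\partial_y^k(v\partial_yw),\comi y^{2\ell}\partial_y^kw}_{L^2}$.

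For $\mathcal R_1$, rather than integrating by parts in $y$ (which would leave an uncontrolled boundary trace at $y=0$ once $k\ge1$), I would use Cauchy--Schwarz and Young's inequality directly: $\partial_y^{k+1}w=\partial_y^{k+2}u$ and $\partial_y^{k+1}\varphi$ occur in $|\vec a|_{Y_\rho}$ --- the $m=0$ part of \eqref{defY}, shifted by one in $k$ --- with weight $\sqrt{k+2}\,L_{\rho,k+1}$, whereas $L_{\rho,k}\le C(k+2)L_{\rho,k+1}$ by \eqref{loup}; distributing the spare factor $\sqrt{k+2}$ against the extra weight $\sqrt{k+1}$ that $|\vec a|_{Y_\rho}$ carries on $\partial_y^k\varphi$ and $\partial_y^kw$ gives $\abs{\mathcal R_1}\le C|\vec a|_{Y_\rho}^2$. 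For $\mathcal R_2$, I expand by Leibniz: the $j=0$ contributions $u\partial_x\partial_y^kw$ and $v\partial_y\partial_y^kw$, after integration by parts in $x$, resp.\ $y$ (using $\partial_yv=-\partial_xu$ and $v|_{y=0}=0$ so that no boundary term appears), cost only $\big(\norm{\partial_xu}_{L^\infty}+\norm{\comi y^{-1}v}_{L^\infty}\big)\sum_kL_{\rho,k}^2\norm{\comi y^\ell\partial_y^kw}_{L^2}^2$, which is $\le C(1+|\vec a|_{X_\rho}^2)|\vec a|_{Y_\rho}^2$ by Lemma \ref{lem:tang}, Corollary \ref{cor:v} and Sobolev embedding; the $j\ge1$ commutator terms $\binom{k}{j}(\partial_y^ju)(\partial_x\partial_y^{k-j}w)$ and $\binom{k}{j}(\partial_y^jv)(\partial_y^{k+1-j}w)$ (recalling $\partial_y^jv=-\partial_x\partial_y^{j-1}u$) I split at $j=[k/2]$, placing the low-order factor in $L^\infty$ by Sobolev embedding (controlled by $|\vec a|_{X_\rho}$) and the high-order one in $L^2$ (controlled by $|\vec a|_{Y_\rho}$, once more trading a normal derivative for a factor of $k$), controlling the ensuing ratios of $\binom{k}{j}$ against the $L_{\rho,\cdot}$'s by combinatorial inequalities in the spirit of \eqref{fe1}--\eqref{fe2}, and closing with the discrete Young inequality \eqref{dis}.

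The genuinely new feature here, compared with the classical parabolic Prandtl estimate, is the coupling remainder $\mathcal R_1$, which has no analogue there; it is harmless precisely because it loses only normal derivatives, which the analyticity in $y$ absorbs --- each normal derivative costs a single power of $k$ in the weight $L_{\rho,k}$, exactly matched by the spare factor $(k+1)$ built into $|\vec a|_{Y_\rho}$. The technically delicate, though routine, part is the commutator bookkeeping in $\mathcal R_2$: keeping the Gevrey-coefficient ratios uniformly bounded and arranging each product so that exactly one factor falls in a Sobolev-embeddable slot.
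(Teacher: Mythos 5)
Your proposal is correct and arrives at the same energy identity as the paper, but handles the cross-coupling between $\partial_y^k\varphi$ and $\partial_y^{k+1}u$ by a slightly different device. The paper applies $\comi y^\ell\partial_y^{k+1}$ and $\comi y^\ell\partial_y^{k}$ to the two equations in \eqref{neweq}, tests, and integrates by parts in $y$ in $\inner{\comi y^\ell\partial_y^{k+2}u,\ \comi y^\ell\partial_y^k\varphi}_{L^2}$; the cross terms then cancel exactly, leaving only a lower-order weight commutator and a boundary trace at $y=0$. That trace \emph{is} controllable for every $k$ --- the paper bounds it by $C|\vec a|_{Y_\rho}^2$ using the trace inequality $\norm{f}_{L_y^\infty}^2\le2\norm{f}_{L_y^2}\norm{\partial_yf}_{L_y^2}$ and $L_{\rho,k}/L_{\rho,k+1}\lesssim k+1$ --- so your remark that it would be ``uncontrolled'' is not accurate, although this is immaterial since you simply avoid it. Your route keeps the cross terms intact, bounds each by Cauchy--Schwarz, and absorbs the extra normal derivative using $L_{\rho,k}\lesssim(k+2)L_{\rho,k+1}$ against the spare $(m+k+1)$-weight built into $|\vec a|_{Y_\rho}$; a short check confirms this gives $C|\vec a|_{Y_\rho}^2$ as claimed, so neither the cancellation nor the trace inequality is strictly needed for this lemma. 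For the transport commutator, your plan (Leibniz, split at $j=[k/2]$, Sobolev embedding on the low-order factor, discrete Young for the sum) matches the paper's proof of \eqref{esterror}; the only cosmetic difference is that you write the transport equation for $w=\partial_yu$ (exploiting $\partial_yv=-\partial_xu$) so that the $i=1$ term of the paper's $\mathcal R_k$ is pre-absorbed by the cancellation rather than appearing in the Leibniz sum.
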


   \begin{proof}
 We   apply   $\comi y^\ell \partial_y^{k+1}$ and $\comi y^\ell \partial_y^{k}$  to the first and the second equations in \eqref{neweq} respectively to have 
 that
 \begin{equation}\label{varyu}
 \left\{
 \begin{aligned}
 &\big(\partial_t+u\partial_x+v\partial_y\big)   \comi y^\ell   \partial_y^{k+1}   u=  \comi y^\ell  \partial_y^{k+1}   \varphi  +\mathcal R_{k},\\
 &	(\partial_t+1)\comi y^\ell \partial_y^{k}\varphi=\comi y^\ell\partial_y^{k+2}u,
 \end{aligned}
 \right.
 \end{equation}
 where
\begin{equation}\label{errrk}
	 \mathcal R_k =  v\big(\partial_y  \comi y^\ell  \big)  \partial_y^{k+1}   u -\sum_{ i=1}^{k+1} {{k+1}\choose i}\comi y^\ell \Big[(  \partial_y^i u) \partial_x\partial_y^{k+1-i}u +   ( \partial_y^i v) \partial_y^{k+2-i} u\Big].
\end{equation}
Then we take  the $L^2$-product with  $\comi y^\ell \partial_y^{k+1}u$  for the first equation in \eqref{varyu}, and with $ \comi y^\ell \partial_y^{k}\varphi$ for the second one.
By using  
\begin{multline*}
	\inner{\comi y^\ell  \partial_y^{k+2}   u,\  \comi y^\ell  \partial_y^{k}   \varphi}_{L^2}=-\inner{\comi y^\ell  \partial_y^{k+1}   u,\  \comi y^\ell  \partial_y^{k+1}   \varphi}_{L^2} \\
	- \inner{( \partial_y\comi y^{2\ell})\partial_y^{k+1}   u,\    \partial_y^{k}   \varphi}_{L^2}
	 -  \int_{\mathbb R} \big[(\partial_y^{k+1}   u) \partial_y^{k}   \varphi\big]\big |_{y=0} dx,
\end{multline*}
we obtain
\begin{equation}\label{neqva}
\begin{aligned}
&	 \frac{1}{2}\frac{d}{dt} \inner{ \norm{\comi y^\ell  \partial_y^{k}\varphi}_{L^2}^2+\norm{\comi y^\ell  \partial_y^{k+1}u}_{L^2}^2}+ \norm{\comi y^\ell  \partial_y^{k}\varphi}_{L^2}^2\\
	  	&  =  \inner{\mathcal R_k,   \comi y^\ell \partial_y^{k+1}   u}_{L^2} - \inner{( \partial_y\comi y^{2\ell})\partial_y^{k+1}   u,\    \partial_y^{k}   \varphi}_{L^2} - \int_{\mathbb R} \big[(\partial_y^{k+1}   u) \partial_y^{k}   \varphi\big]\big |_{y=0} dx.
\end{aligned}
\end{equation}
In view of   definitions  \eqref{defX}-\eqref{defY} of $|\vec a|_{X_\rho}$ and $|\vec a|_{Y_\rho}$ and \eqref{XY},  we have 
\begin{align*}
	-\sum_{k=0}^{+\infty}L_{\rho , k}^2\inner{( \partial_y\comi y^{2\ell})\partial_y^{k+1}   u,\    \partial_y^{k}   \varphi}_{L^2}\leq C |\vec a|_{X_{\rho }}^2\leq C |\vec a|_{Y_{\rho }}^2.
\end{align*}
This   together with the  Sobolev inequality  $\norm{f}_{L_y^\infty}^2\leq 2\norm{f}_{L_y^2}\norm{\partial_y f}_{L_y^2}$ implies
\begin{align*}
	&-\sum_{k=0}^{+\infty}  L_{\rho , k}^2\int_{\mathbb R} \big[(\partial_y^{k+1}   u) \partial_y^{k}   \varphi\big]\big |_{y=0} dx  \\
	&\leq  C \sum_{k=0}^{+\infty}  \inner{k+1} L_{\rho , k} L_{\rho ,k+1} \inner{\norm{ \partial_y^{k}\varphi}_{L^2} \norm{ \partial_y^{k+1}  \varphi}_{L^2}+\norm{ \partial_y^{k+1}   u}_{L^2} \norm{ \partial_y^{k+2}   u}_{L^2}  }\\
	&\leq C \sum_{k=0}^{+\infty}\inner{k+1}   L_{\rho , k}^2\inner{ \norm{ \partial_y^{k}\varphi}_{L^2}^2+ \norm{ \partial_y^{k }   \partial_y u}_{L^2}^2}\leq C|\vec a|_{Y_{\rho}}^2,
\end{align*}
where in the first inequality  we have used the fact that $L_{\rho, k}/L_{\rho,k+1}=(k+2)/\rho\leq C(k+1)$ due to \eqref{loup}.
As a result, by  the above estimates and \eqref{devlprho}, we multiply equation \eqref{neqva} by $L_{\rho,k}^2$ and then take the summation over $k\geq 0$  to obtain 
\begin{equation}\label{noebou}
\begin{aligned}
	&\frac{1}{2}\frac{d}{dt} \sum_{k=0}^{+\infty}L_{\rho , k}^2\inner{ \norm{\comi y^\ell  \partial_y^{k}\varphi}_{L^2}^2+\norm{\comi y^\ell  \partial_y^{k}\partial_yu}_{L^2}^2} \\
	& \leq -\mu \sum_{k=0}^{+\infty} \inner{k+1} L_{\rho , k}^2\inner{ \norm{\comi y^\ell  \partial_y^{k}\varphi}_{L^2}^2+\norm{\comi y^\ell  \partial_y^{k}\partial_yu}_{L^2}^2}\\
	&\quad+\sum_{k=0}^{+\infty}L_{\rho , k}^2 \inner{\mathcal R_k,\  \comi y^\ell \partial_y^{k+1}   u}_{L^2}+C |\vec a|_{Y_{\rho }}^2,
\end{aligned}
\end{equation}
where $\mathcal R_m$ is given by \eqref{errrk}. 
For the term in the last inequality, we claim that 
\begin{equation}\label{esterror}
	\sum_{k=0}^{+\infty}L_{\rho , k}^2 \inner{\mathcal R_k,\  \comi y^\ell \partial_y^{k+1}   u}_{L^2} \leq    C\big(1+|\vec a|_{X_{\rho}}^2\big) |\vec a|_{Y_{\rho}}^2.
\end{equation}
The proof of \eqref{esterror} is postponed after the proof of the lemma. Now with the claim and  the above two estimates, we  complete the proof of lemma.
\end{proof}

\begin{proof}[Proof of assertion \eqref{esterror}]
We use the estimate
\begin{align*}
	  \inner{\mathcal R_k,   \comi y^\ell \partial_y^{k+1}   u}_{L^2}
	  \leq
	 \inner{k+1}      \norm{ \comi y^\ell  \partial_y^{k+1}   u }_{L^2}^2+  \Big[ \inner{k+1}^{-1/2}     \norm{ \mathcal R_k}_{L^2}\Big]^2
\end{align*}
and definition \eqref{defY} of $|\vec a|_{Y_\rho}$ to obtain
\begin{equation}\label{asak}
	\sum_{k=0}^{+\infty}L_{\rho , k}^2 \inner{\mathcal R_k,\  \comi y^\ell \partial_y^{k+1}   u}_{L^2}
	   \leq
	 |\vec a|_{Y_\rho}^2+ \sum_{k=0}^{+\infty}\Big[ \inner{k+1}^{-1/2} L_{\rho , k}     \norm{ \mathcal R_k}_{L^2}\Big]^2.
\end{equation}
Moreover, in view of \eqref{errrk}, it follows that
\begin{equation}\label{rk}
\begin{aligned}
	&\sum_{k=0}^{+\infty}\Big[ \inner{k+1}^{-1/2} L_{\rho , k}     \norm{ \mathcal R_k}_{L^2}\Big]^2\\
	&\leq C |\vec a|_{X_{\rho }}^3+ \sum_{k=0}^{+\infty} \bigg[ (k+1)^{-1/2} L_{\rho , k}    \sum_{ i=1}^{k+1} {{k+1}\choose i}   \norm{\comi y^\ell(  \partial_y^i u)\partial_x \partial_y^{k+1-i} u}_{L^2}\bigg]^2  \\
	& \quad+\sum_{k=0}^{+\infty} \bigg[   (k+1)^{-1/2}  L_{\rho , k}   \sum_{ i=1}^{k+1} {{k+1}\choose i}  \norm{\comi y^\ell (  \partial_y^i v) \partial_y^{k+2-i} u}_{L^2}\bigg]^2.
\end{aligned}
\end{equation}
For the last term the right-hand side, we use the decomposition
\begin{align*}
	\sum_{ i=1}^{k+1}= \sum_{ i=1}^{[(k+1)/2]}+\sum_{ i=[(k+1)/2]+1}^{k+1},
\end{align*}
to write
\begin{equation}\label{ambm}
	   (k+1)^{-1/2}  L_{\rho , k}   \sum_{ i=1}^{k+1} {{k+1}\choose i}  \norm{\comi y^\ell (  \partial_y^i v) \partial_y^{k+2-i} u}_{L^2}  \leq p_k+q_k,
	  \end{equation}
with
\begin{eqnarray*}
	\begin{aligned}
		 p_k&=		   \sum_{i=1}^{[(k+1)/2]}   \frac{(k+1)!}{i!(k+1-i)!}  \frac{(k+1)^{-1/2} L_{\rho ,k} }{H_{\rho,4, i-1}L_{\rho ,k+1-i}  }  \\
		 &\qquad\qquad\qquad\quad \times \big( H_{\rho,4, i-1} \norm{ \partial_y^{i} v}_{L^\infty } \big)\big(L_{\rho , k+1-i} \norm{ \comi y^{\ell}  \partial_y^{k+2-i}u}_{L^2}\big),\\
			q_k&=    \sum_{i=[(k+1)/2]+1}^{k+1}\frac{(k+1)!}{i!(k+1-i)!} \frac{(k+1)^{-1/2}  L_{\rho ,k} }{H_{\rho, 2, i-2}H_{\rho,3,k+2-i}  }\\
		&\qquad\qquad\qquad\qquad\times\big( H_{\rho,2,i-2} \norm{   \partial_y^{i} v}_{L^2 } \big)\big(H_{\rho,3, k+2-i} \norm{ \comi y^{\ell} \partial_y^{k+2-i}u}_{L^\infty}\big).	\end{aligned}
\end{eqnarray*}
For the term $p_k$, we first note the following  estimate (see Appendix \ref{sec:app} for its proof) that
\begin{equation}\label{fe3}
\forall\ 1\leq i \leq [(k+1)/2],\quad   \frac{(k+1)!}{i!(k+1-i)!}  \frac{ (k+1)^{-{1\over2}} L_{\rho ,k} }{H_{\rho,4,i-1}L_{\rho ,k+1-i}  }   \leq C \frac{(k+2-i)^{1\over2}}{ i +1}.
\end{equation}
Following an argument  similar to  \eqref{YE}, we have 
\begin{equation*}
\begin{aligned}
	& \sum_{k=0}^{+\infty} p_k^2  \leq  C\sum_{k=0}^{+\infty} \bigg[\sum_{i=1}^{ k+1}
		 \frac{  H_{\rho,4 ,i-1} \norm{ \partial_y^{i} v}_{L^\infty }  }{i+1}  (k+2-i)^{1\over 2}  L_{\rho , k+1-i} \norm{ \comi y^{\ell}  \partial_y^{k+2-i}u}_{L^2} \bigg]^2\\
		 &\leq     C\bigg( \sum_{i=1}^{+\infty}
		 \frac{  H_{\rho,4 ,i-1} \norm{ \partial_y^{i} v}_{L^\infty }  }{i+1} \bigg)^2 \sum_{i=0}^{+\infty}
		\inner{i+1} L_{\rho , i}^2 \norm{ \comi y^{\ell}  \partial_y^{i+1}u}_{L^2}^2\\
		& \leq C |\vec a|_{Y_\rho}^2 \sum_{i=1}^{+\infty} H_{\rho,4 ,i-1} ^2 \norm{ \partial_y^{i} v}_{L^\infty } ^2,
		 \end{aligned}
\end{equation*}
where we have used   \eqref{dis} in the second inequality.   Moreover, using   the Sobolev embedding inequality and Hardy's inequality, we obtain
\begin{multline*}
	\sum_{i=1}^{+\infty} H_{\rho,4,i-1} ^2 \norm{ \partial_y^{i} v}_{L^\infty } ^2 \leq  		  H_{\rho,4,0}^2 \norm{\partial_x u}_{L^\infty }^2 +    \sum_{i=2}^{+\infty}
		  H_{\rho,4,i-1}^2 \norm{\partial_x \partial_y^{i-1} u}_{L^\infty }  ^2 \\
		    \leq C   	  \norm{\comi y^\ell \partial_x \partial_yu}_{L_x^\infty L_y^2 }^2 +C |\vec a|_{X_{\rho}}^2\leq C |\vec a|_{X_{\rho}}^2.
\end{multline*}
Combining the above estimates gives
\begin{equation}\label{ak}
	\sum_{k=0}^{+\infty} p_k^2\leq C  |\vec a|_{X_{\rho}}^2  |\vec a|_{Y_{\rho}}^2.
\end{equation}
Similarly, by using the estimate (see Appendix \ref{sec:app} for its proof)
\begin{equation}\label{fe4}
	\forall\  [(k+1)/2]< i\leq k+1,\ \	\frac{(k+1)!}{i!(k+1-i)!}  \frac{(k+1)^{-{1\over2}} L_{\rho ,k} }{H_{\rho, 2, i-2}H_{\rho,3 ,k+2-i}  }\leq  \frac{C }{k+3-i},
\end{equation}
and observing $|\vec a|_{X_\rho}\leq |\vec a|_{Y_\rho},$ we have
 \begin{align*}
	  \sum_{k=0}^{+\infty} q_k^2 \leq C\bigg( \sum_{i=0}^{+\infty}
		\frac{H_{\rho, 3 , i} \norm{  \partial_y^{i}u}_{L^\infty}}{i+1} \bigg)^2 \sum_{i=0}^{+\infty}
		  H_{\rho, 2,   i}^2 \norm{ \partial_x \partial_y^{i+1} u}_{L^2}^2
		   \leq   C|\vec a|_{X_\rho}^2|\vec a|_{Y_\rho}^2.
\end{align*}
This together with   \eqref{ak} and \eqref{ambm} yields
\begin{equation*}
	\sum_{k=0}^{+\infty} \bigg[ (k+1)^{-1/2}  L_{\rho , k}   \sum_{ i=1}^{k+1} {{k+1}\choose i}  \norm{\comi y^\ell (  \partial_y^i v) \partial_y^{k+2-i} u}_{L^2}\bigg]^2\leq C|\vec a|_{X_{\rho}}^2 |\vec a|_{Y_{\rho}}^2.
\end{equation*}
Similarly,
\begin{equation*}
	\sum_{k=0}^{+\infty} \bigg[  (k+1)^{-1/2}  L_{\rho , k}    \sum_{ i=1}^{k+1} {{k+1}\choose i}   \norm{\comi y^\ell(  \partial_y^i u)\partial_x \partial_y^{k+1-i} u}_{L^2}\bigg]^2 \leq C|\vec a|_{X_{\rho}}^2 |\vec a|_{Y_{\rho}}^2.
\end{equation*}
Finally, by  substituting the above two estimates into \eqref{rk} and  using \eqref{XY},
we obtain
\begin{align*}
	\sum_{k=0}^{+\infty}\Big[ \inner{k+1}^{-1/2} L_{\rho , k}     \norm{ \mathcal R_k}_{L^2}\Big]^2\leq  C\big(1+|\vec a|_{X_{\rho}}^2\big) |\vec a|_{Y_{\rho}}^2. \end{align*}
This with \eqref{asak} yields  the desired
  assertion \eqref{esterror}. The proof of the claim\eqref{esterror} is completed.
  \end{proof}

\subsection{Tangential derivatives}\label{subsec:tan}

 In this subsection, we consider  the  tangential derivatives of $\varphi$ and $\partial_yu$.

\begin{lemma}\label{lem:tan}
Under the assumptions of Theorem \ref{thm:apriori}, we have,
\begin{equation*}
	\begin{aligned}
 	&\frac12\frac{d}{dt} \sum_{m=0}^{+\infty}(m+1)^2 N_{\rho,m+1}^2 \Big( \norm{ \comi y^\ell  \partial_x^m  \varphi }_{L^2}^2+   \norm{\comi y^\ell  \partial_x^m\partial_y u }_{L^2}^2\Big)   \\
	 &\leq C  \big(1+|\vec a|_{X_\rho}^4 \big)  |\vec a|_{Y_\rho}^2 -\mu \sum_{m=0}^{+\infty} (m+1)^3 N_{\rho,m+1}^2 \big( \norm{ \comi y^\ell  \partial_x^m  \varphi }_{L^2}^2+   \norm{\comi y^\ell  \partial_x^m\partial_y u }_{L^2}^2\big).
	\end{aligned}
\end{equation*}
Recall $N_{\rho,m}$ is defined by \eqref{nl}.	
\end{lemma}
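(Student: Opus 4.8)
The plan is to run the same two-step energy scheme as in Lemma~\ref{lem:nor}, now applying tangential instead of normal derivatives to \eqref{neweq}. First I apply $\comi y^\ell\partial_x^m\partial_y$ to the first equation of \eqref{neweq} and $\comi y^\ell\partial_x^m$ to the second; with $\Phi_m\eqdef\comi y^\ell\partial_x^m\partial_yu$ and $\Psi_m\eqdef\comi y^\ell\partial_x^m\varphi$ this gives
\begin{equation*}
\left\{
\begin{aligned}
&\big(\partial_t+u\partial_x+v\partial_y\big)\Phi_m=\comi y^\ell\partial_x^m\partial_y\varphi+\mathcal R_m,\\
&(\partial_t+1)\Psi_m=\comi y^\ell\partial_x^m\partial_y^2u=\partial_y\Phi_m-(\partial_y\comi y^\ell)\partial_x^m\partial_yu,
\end{aligned}
\right.
\end{equation*}
where, after using $\partial_yv=-\partial_xu$ and the reindexing identity $\sum_{i=0}^m{m\choose i}(\partial_x^i\partial_yu)(\partial_x^{m-i+1}u)=\sum_{i=0}^m{m\choose i}(\partial_x^{i+1}u)(\partial_x^{m-i}\partial_yu)$, which makes the two quadratic terms of order $m+1$ cancel exactly as for the classical Prandtl equation, the remainder reduces to
\begin{equation*}
\mathcal R_m=v\big(\partial_y\comi y^\ell\big)\partial_x^m\partial_yu-\comi y^\ell\sum_{i=1}^m{m\choose i}\Big[(\partial_x^iu)\partial_x^{m-i+1}\partial_yu+(\partial_x^iv)\partial_x^{m-i}\partial_y^2u\Big].
\end{equation*}
Taking the $L^2$ inner product of the first line with $\Phi_m$ and of the second with $\Psi_m$, the convection term vanishes since $\partial_xu+\partial_yv=0$ and $v|_{y=0}=0$; integrating by parts in $y$ in $\inner{\partial_y\Phi_m,\Psi_m}_{L^2}$ yields a boundary contribution proportional to $(\partial_x^m\partial_yu)\partial_x^m\varphi\big|_{y=0}$, which is zero because $\varphi|_{y=0}=0$; and, exactly as in Lemma~\ref{lem:nor}, the term $\inner{\comi y^\ell\partial_x^m\partial_y\varphi,\Phi_m}_{L^2}$ from the first equation is cancelled by the matching term coming from $\partial_y\Phi_m$ in the second. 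This leaves the closed identity
\begin{equation*}
\frac12\frac{d}{dt}\big(\norm{\Phi_m}_{L^2}^2+\norm{\Psi_m}_{L^2}^2\big)+\norm{\Psi_m}_{L^2}^2=\inner{\mathcal R_m,\Phi_m}_{L^2}-\inner{(\partial_y\comi y^{2\ell})\partial_x^m\partial_yu,\partial_x^m\varphi}_{L^2}.
\end{equation*}

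Next I multiply by $(m+1)^2N_{\rho,m+1}^2$ and sum over $m\ge0$. By \eqref{dev:rho} one has $\frac{d}{dt}N_{\rho,m+1}^2=-2\mu(m+2)N_{\rho,m+1}^2$, so differentiating the weight produces the term $-\mu\sum_m(m+1)^2(m+2)N_{\rho,m+1}^2\big(\norm{\Phi_m}^2+\norm{\Psi_m}^2\big)$, which is bounded above by the dissipation term $-\mu\sum_m(m+1)^3N_{\rho,m+1}^2(\cdots)$ in the statement; the nonnegative term $\norm{\Psi_m}_{L^2}^2$ is discarded. Since $|\partial_y\comi y^{2\ell}|\le C\comi y^{\ell}\comi y^{\ell-1}$, the last inner product is $\le C\norm{\Phi_m}_{L^2}\norm{\comi y^{\ell}\partial_x^m\varphi}_{L^2}$, hence summable into $C|\vec a|_{X_\rho}^2\le C|\vec a|_{Y_\rho}^2$ by \eqref{defX}--\eqref{XY}. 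Everything therefore reduces to estimating $\sum_m(m+1)^2N_{\rho,m+1}^2\inner{\mathcal R_m,\Phi_m}_{L^2}$, for which, as in Lemma~\ref{lem:nor}, I use $\inner{\mathcal R_m,\Phi_m}_{L^2}\le(m+1)\norm{\Phi_m}_{L^2}^2+(m+1)^{-1}\norm{\mathcal R_m}_{L^2}^2$; the first piece sums to $\sum_m(m+1)^3N_{\rho,m+1}^2\norm{\Phi_m}^2\le|\vec a|_{Y_\rho}^2$ by \eqref{defY}, so only
\begin{equation*}
\sum_{m=0}^{+\infty}(m+1)N_{\rho,m+1}^2\norm{\mathcal R_m}_{L^2}^2\le C\big(1+|\vec a|_{X_\rho}^4\big)|\vec a|_{Y_\rho}^2
\end{equation*}
remains to be proved.

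This last inequality is the heart of the argument and is handled as in the $p_k,q_k$ analysis of Lemma~\ref{lem:nor}: for each of the two quadratic sums defining $\mathcal R_m$ I split $\sum_{i=1}^m=\sum_{i=1}^{[m/2]}+\sum_{i=[m/2]+1}^m$, putting an $L^\infty$-norm (via $\norm{\cdot}_{L_y^\infty}^2\le2\norm{\cdot}_{L_y^2}\norm{\partial_y\cdot}_{L_y^2}$ in the normal variable and Sobolev embedding in the tangential variable) on the low-order factor and an $L^2$-norm on the high-order one, then using coefficient ratio bounds of the type \eqref{fe1}--\eqref{fe4}, giving factors $C(i+1)^{-1}$ or $C(m-i+1)^{-1}$, and closing with the discrete Young inequality \eqref{dis}. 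The high-frequency tangential factors $\partial_x^iu$ are controlled via Lemma~\ref{lem:tang}, the factors $\partial_x^iv$ via Corollary~\ref{cor:v}, and the weight-transport term $v(\partial_y\comi y^\ell)\partial_x^m\partial_yu$ is bounded by $C|\vec a|_{X_\rho}\norm{\comi y^{\ell}\partial_x^m\partial_yu}_{L^2}$ after estimating $\norm{v}_{L^\infty}$ by Hardy's and Sobolev's inequalities. I expect the main technical obstacle to be the single term $\comi y^\ell(\partial_x^mv)\partial_y^2u$, the $i=m$ contribution to the second sum, since $\partial_x^mv$ effectively carries $m+1$ tangential derivatives, one more than anything appearing in $|\vec a|_{X_\rho}$ with tangential index $m$. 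This is precisely the point where the hyperbolic structure, encoded in the \emph{shifted} weight $N_{\rho,m+1}$ tuned to $m+1$ tangential derivatives, leaves exactly enough room: writing $\norm{\comi y^\ell(\partial_x^mv)\partial_y^2u}_{L^2}\le\norm{\partial_x^mv}_{L_x^2L_y^\infty}\norm{\comi y^\ell\partial_y^2u}_{L_x^\infty L_y^2}$, with the second factor an absolute constant times $1+|\vec a|_{X_\rho}$ by Sobolev embedding, and invoking the second estimate of Corollary~\ref{cor:v}, $\sum_m(m+1)N_{\rho,m+1}^2\norm{\partial_x^mv}_{L_x^2L_y^\infty}^2\le C(1+|\vec a|_{X_\rho}^2)|\vec a|_{Y_\rho}^2$, closes this term; the remaining bookkeeping of the coefficient inequalities and the $L^\infty/L^2$ assignment across the low--high split is then routine.
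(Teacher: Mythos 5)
Your proposal is correct and follows essentially the same route as the paper's proof of Lemma~\ref{lem:tan}: apply $\comi y^\ell\partial_x^m\partial_y$ and $\comi y^\ell\partial_x^m$ to the two equations, observe that the top-order quadratic terms cancel after using $\partial_yv=-\partial_xu$ and reindexing, note that the boundary contribution vanishes because $\varphi|_{y=0}=0$, absorb the transport and weight-commutator terms, apply Cauchy--Schwarz to push $(m+1)$ onto the dissipation, and reduce to $\sum_m(m+1)N_{\rho,m+1}^2\norm{\mathcal P_m}_{L^2}^2\le C(1+|\vec a|_{X_\rho}^4)|\vec a|_{Y_\rho}^2$, which is handled by the low/high splitting with discrete Young and coefficient-ratio bounds (the paper's \eqref{fe6}, \eqref{fe5}). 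Your treatment of the delicate $i=m$ term via $\norm{\partial_x^mv}_{L_x^2L_y^\infty}$ and Corollary~\ref{cor:v} is precisely what the paper's $B_m$ estimate does.
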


\begin{proof}
We apply   $\comi y^\ell \partial_x^m\partial_y$ and $\comi y^\ell \partial_x^m $ to the first and   second equations in \eqref{neweq} respectively to have 
\begin{equation*}
	\left\{
	\begin{aligned}
		&\big(\partial_t+u\partial_x+v\partial_y\big) \comi y^\ell  \partial_x^m\partial_y u =\comi y^\ell  \partial_x^m \partial_y\varphi +\mathcal P_m,\\
		&\big(\partial_t+1\big) \comi y^\ell  \partial_x^m\varphi=\comi y^\ell  \partial_x^m\partial_y^2u,
	\end{aligned}
	\right.
\end{equation*}
where
\begin{equation}\label{depm}
	\mathcal P_m=v(\partial_y\comi y^\ell )\partial_x^m \partial_y u -\sum_{j=1}^m {m\choose j}\comi y^\ell\Big[(\partial_x^j u)\partial_x^{m-j+1}\partial_yu+ (\partial_x^j v)\partial_x^{m-j}\partial_y^2u\Big].
\end{equation}
  Following a similar argument as  the proof of Lemma \ref{lem:nor}, by  observing that
  \begin{multline*}
    	 \inner{ \comi y^{\ell} \partial_x^m\partial_y^2u, \ \comi y^{\ell}\partial_x^m\varphi}_{L^2}\\
    	 =-\inner{\comi y^{\ell}\partial_x^m\partial_yu, \ \comi y^{\ell}\partial_x^m\partial_y\varphi}_{L^2}-\inner{(\partial_y \comi y^{2\ell})\partial_x^m\partial_yu, \ \partial_x^m\varphi}_{L^2},
  \end{multline*}
  we get
\begin{multline*}
	 \frac12\frac{d}{dt} \Big(  \norm{ \comi y^\ell  \partial_x^m  \varphi }_{L^2}^2+  \norm{\comi y^\ell  \partial_x^m\partial_y u }_{L^2}^2\Big)+\norm{ \comi y^\ell  \partial_x^m  \varphi }_{L^2}^2 \\
	  =   -\inner{(\partial_y \comi y^{2\ell})\partial_x^m\partial_yu, \ \partial_x^m\varphi}_{L^2}+ \inner{\mathcal P_m,\  \comi y^\ell \partial_x^m\partial_yu  }_{L^2}.
\end{multline*}
Thus, by using \eqref{dev:rho} and the  fact that $-\mu(m+2)\leq -\mu (m+1),$ we have 
\begin{equation*}
\begin{aligned}
 	&\frac12\frac{d}{dt} \sum_{m=0}^{+\infty}(m+1)^2 N_{\rho,m+1}^2 \Big( \norm{ \comi y^\ell  \partial_x^m  \varphi }_{L^2}^2+   \norm{\comi y^\ell  \partial_x^m\partial_y u }_{L^2}^2\Big)   \\
	 &\leq -\mu \sum_{m=0}^{+\infty} (m+1)^3 N_{\rho,m+1}^2 \Big( \norm{ \comi y^\ell  \partial_x^m  \varphi }_{L^2}^2+   \norm{\comi y^\ell  \partial_x^m\partial_y u }_{L^2}^2\Big)\\
	 &\quad+C  |\vec a|_{X_\rho}^2 +\sum_{m=0}^{+\infty} (m+1)^2 N_{\rho,m+1}^2 \inner{\mathcal P_m,\  \comi y^\ell \partial_x^m\partial_yu  }_{L^2}.
	\end{aligned}
\end{equation*}
It remains to estimate the last term on the right-hand side. Similar to \eqref{asak}, it holds that 
\begin{multline}\label{m1pm}
	\sum_{m=0}^{+\infty} (m+1)^2 N_{\rho,m+1}^2 \inner{\mathcal P_m,\  \comi y^\ell \partial_x^m\partial_yu  }_{L^2}\\
	\leq  |\vec a|_{Y_\rho}^2+ \sum_{m=0}^{+\infty}\Big[ (m+1)^{1/2}   N_{\rho , m+1}     \norm{ \mathcal P_m }_{L^2}\Big]^2.
\end{multline}
Then  Lemma \ref{lem:tan} holds  by the above inequalities if 
\begin{equation}\label{erpm}
	\sum_{m=0}^{+\infty}\Big[ (m+1)^{1/2}   N_{\rho , m+1}     \norm{ \mathcal P_m }_{L^2}\Big]^2\leq C\big(1+|\vec a|_{X_\rho}^4 \big)  |\vec a|_{Y_\rho}^2.
\end{equation}
We now turn to prove  \eqref{erpm}. In view of \eqref{depm}, we use  the fact that $\norm{v}_{L_y^\infty}\leq C\norm{\comi y^\ell \partial_x\partial_yu}_{L_y^2}$ by Hardy's inequality to obtain
\begin{equation}\label{mpm}
\begin{aligned}
	& \sum_{m=0}^{+\infty}\Big[ (m+1)^{1/2}   N_{\rho , m+1}     \norm{ \mathcal P_m }_{L^2}\Big]^2\\
	& \leq C|\vec a|_{X_\rho}^3+ \sum_{m=0}^{+\infty} \bigg[    (m+1)^{1/2}   N_{\rho , m+1}    \sum_{ j=1}^{m} {m\choose j}   \norm{\comi y^\ell(  \partial_x^j u)\partial_x^{m-j+1} \partial_y u}_{L^2}\bigg]^2  \\
&
\quad +\sum_{m=0}^{+\infty} \bigg[   (m+1)^{1/2}  N_{\rho , m+1}   \sum_{ j=1}^{m} {{m}\choose j}  \norm{\comi y^\ell (  \partial_x^j v) \partial_x^{m-j}\partial_y^{2} u}_{L^2}\bigg]^2.
 \end{aligned}
\end{equation}
For the last term above inequality,  as  \eqref{ambm}, we write
\begin{equation} \label{nm}
	 (m+1)^{1/2} N_{\rho , m+1}   \sum_{ j=1}^{m} {{m}\choose j}  \norm{\comi y^\ell (  \partial_x^j v) \partial_x^{m-j}\partial_y^{2} u}_{L^2}=  A_m+B_m,
\end{equation}
with
\begin{eqnarray*}
	\begin{aligned}
		 &A_m=    \sum_{j=1}^{[m/2]}   \frac{m!}{j!(m-j)!}  \frac{(m+1)^{1/2}   N_{\rho ,m+1} }{ N_{\rho , j+3}H_{\rho ,m-j+1,1}  }\\
		&\qquad\qquad\qquad\qquad\times\big( N_{\rho ,j+3} \norm{ \partial_x^{j} v}_{L^\infty } \big)\big( H_{\rho, m-j+1, 1} \norm{ \comi y^{\ell} \partial_x^{m-j} \partial_y^{2} u}_{L^2}\big),\\
			&B_m =    \sum_{j=[m/2]+1}^{m} \frac{m!}{j!(m-j)!}   \frac{ (m+1)^{1/2}  N_{\rho ,m+1} }{  N_{\rho ,j+1}H_{\rho ,m-j+3,1}  }\\
		& \qquad\qquad \qquad \times\big(  N_{\rho , j+1} \norm{ \partial_x^{j} v}_{L_x^2L_y^\infty} \big) \big( H_{\rho , m-j+3,1} \norm{ \comi y^{\ell}    \partial_x^{m-j} \partial_y^{2}u}_{L_x^\infty L_y^2}\big).
	\end{aligned}
\end{eqnarray*}
Moreover, by using the estimate (see Appendix \ref{sec:app} for its proof)
\begin{equation}\label{fe6}
	\forall\ 1\leq j\leq [m/2],\quad  \frac{m!}{j!(m-j)!} \frac{  (m+1)^{1\over2}  N_{\rho ,m+1} }{ N_{\rho , j+3}H_{\rho ,m-j+1,1}  }\leq  \frac{C(m-j+1)^{3\over2}}{j+1},
\end{equation}
we follow  a similar argument as that after \eqref{ambm} to conclude
  that
\begin{align*}
	\sum_{m=0}^{+\infty} A_m^2  &\leq C \Big(\sum_{j=0}^{+\infty}\frac{ N_{\rho,j+3}  \norm{ \partial_x^{j} v}_{L^\infty }}{ j+1} \Big)^2\sum_{j=0}^{+\infty} (j+1) ^3 H_{\rho , j+1,1}^2 \norm{ \comi y^\ell \partial_x^{j} \partial_y^2u}_{L^2}^2\\
	&\leq C\inner{1+|\vec a|_{X_\rho}^2} |\vec a|_{X_\rho}^2 |\vec a|_{Y_\rho}^2    \leq   C\big(1+|\vec a|_{X_\rho}^4 \big)  |\vec a|_{Y_\rho}^2,
\end{align*}
where we have used Corollary \ref{cor:v} in the second inequality. On the other hand, we note that
\begin{equation}\label{fe5}
\forall\  [m/2]+1\leq j\leq m,\quad	\frac{m!}{j!(m-j)!}  \frac{ (m+1)^{1\over2}    N_{\rho ,m+1} }{ N_{\rho ,j+1}H_{\rho ,m-j+3,1}  }\leq C \frac{(j+1)^{1/2}}{m-j+1},
\end{equation}
where its proof is given in Appendix \ref{sec:app}. Thus, following a similar argument as that after \eqref{ambm} and using Corollary \ref{cor:v}, we conclude
\begin{align*}
	\sum_{m=0}^{+\infty} B_m^2 &\leq C \Big(\sum_{j=0}^{+\infty}\frac{ H_{\rho,j+3,1}  \norm{ \comi y^{\ell}    \partial_x^{j} \partial_y^{2}u}_{L_x^\infty L_y^2}}{ j+1} \Big)^2\sum_{j=0}^{+\infty} (j+1) N_{\rho , j+1}^2 \norm{ \partial_x^{j} v}_{L_x^2L_y^\infty}^2\\
	&\leq C|\vec a|_{X_\rho}^2\inner{1+|\vec a|_{X_\rho}^2}  |\vec a|_{Y_\rho}^2   \leq   C\big(1+|\vec a|_{X_\rho}^4 \big)  |\vec a|_{Y_\rho}^2.
		\end{align*}
 As a result, we combine the above estimates with \eqref{nm} to obtain
\begin{equation}\label{tanest}
	\sum_{m=0}^{+\infty} \bigg[	 (m+1)^{1\over 2} N_{\rho , m+1}   \sum_{ j=1}^{m} {{m}\choose j}  \norm{\comi y^\ell (  \partial_x^j v) \partial_x^{m-j}\partial_y^{2} u}_{L^2}\bigg]^2 \leq   C\big(1+|\vec a|_{X_\rho}^4 \big)  |\vec a|_{Y_\rho}^2.
\end{equation}
Similar argument also yields
\begin{align*}
	\sum_{m=0}^{+\infty} \bigg[  (m+1)^{1\over2}   N_{\rho , m+1}    \sum_{ j=1}^{m} {m\choose j}   \norm{\comi y^\ell(  \partial_x^j u)\partial_x^{m-j+1} \partial_y u}_{L^2}\bigg]^2 \leq   C (1+|\vec a|_{X_\rho}^4 )  |\vec a|_{Y_\rho}^2.
\end{align*}
Then substituting the above two inequalities into \eqref{mpm}, the estimate \eqref{erpm} follows. The proof of the lemma  is completed.
\end{proof}

 \subsection{Proof of Proposition \ref{prp:varu}} We will follow the argument used in the 
  Subsections \ref{subsec:nor} and \ref{subsec:tan}  to derive the estimate on   the mixed derivatives. For this, we apply  $ \comi y^\ell \partial_x^m \partial_y^{k+1} $   and  $\comi y^\ell \partial_x^m \partial_y^{k} $ to the first and the second equations in \eqref{neweq} to obtain 
 \begin{equation*}
 	\left\{\begin{aligned}
 		&\big(\partial_t+u\partial_x+v\partial_y\big)   \comi y^\ell \partial_x^m \partial_y^{k+1}   u = \comi y^\ell \partial_x^m \partial_y^{k+1}   \varphi  +\mathcal Q_{m,k},
\\
&\big(\partial_t+1\big)   \comi y^\ell \partial_x^m \partial_y^{k} \varphi =  \comi y^\ell \partial_x^m \partial_y^{k} \partial_y^2u,
 	\end{aligned}
 	\right.
 \end{equation*}
where
\begin{equation}\label{qmk}
\begin{aligned}
	 \mathcal Q_{m,k}=&v (\partial_y  \comi y^\ell )\partial_x^m \partial_y^{k+1}   u -\sum_{ i+j\geq 1 }  {m\choose j}{{k+1}\choose i}\comi y^\ell  (\partial_x^j \partial_y^i u)\partial_x^{m-j+1}\partial_y^{k+1-i}u  \\
	&\quad-\sum_{\stackrel{i+j\geq 1}{j\leq m, i\leq k+1} }  {m\choose j}{{k+1}\choose i}\comi y^\ell     (\partial_x^j \partial_y^i v)\partial_x^{m-j}\partial_y^{k+2-i} u.
	\end{aligned}
\end{equation}
Thus, we follow  a similar argument as   in the proof of \eqref{noebou}   and observe the fact that
\begin{equation*}
	\frac12\frac{d}{dt}\inner{H_{\rho,m+1,k}^2 }=-\mu(m+k+2)H_{\rho,m+1,k}^2 \leq -\mu(m+k+1)H_{\rho,m+1,k}^2,
\end{equation*}
to obtain
\begin{equation}\label{taestimate}
\begin{aligned}
	&\frac12\frac{d}{dt} \sum_{m=0}^{+\infty}\sum_{k=0}^{+\infty} (m+1)^2 H_{\rho,m+1,k}^2  \Big( \norm{ \comi y^\ell  \partial_x^m \partial_y^k  \varphi }_{L^2}^2+   \norm{\comi y^\ell  \partial_x^m \partial_y^{k+1} u }_{L^2}^2\Big) \\
	 &\leq C  |\vec a|_{Y_\rho}^2 +\sum_{m=0}^{+\infty}\sum_{k=0}^{+\infty} (m+1)^2 H_{\rho,m+1,k}^2   \inner{\mathcal Q_{m,k},\  \comi y^\ell \partial_x^m\partial_y^{k+1}u  }_{L^2} \\
	 &\   -\mu \sum_{m,k}  \inner{m+k+1}  (m+1)^2  H_{\rho,m+1,k}^2 ( \norm{ \comi y^\ell  \partial_x^m \partial_y^k \varphi }_{L^2}^2+   \norm{\comi y^\ell  \partial_x^m\partial_y^{k+1} u }_{L^2}^2 ).
\end{aligned}
\end{equation}
Similar to \eqref{asak} and \eqref{m1pm}, by recalling $\mathcal Q_{m,k}$ is given by \eqref{qmk}, we have
\begin{equation}\label{s1s2}
	\sum_{m,k\geq0}  (m+1)^2 H_{\rho,m+1,k}^2  \big(\mathcal Q_{m,k},\  \comi y^\ell \partial_x^m\partial_y^{k+1}u  \big)_{L^2}\leq C |\vec a|_{X_\rho}^3+|\vec a|_{Y_\rho}^2+ S_1+S_2,
\end{equation}
with
\begin{align*}
	S_1=&\sum_{m,k\geq0} \bigg[ (m+k+1)^{-1/2}(m+1)   H_{\rho , m+1,k}   \\
	&\qquad\quad\quad  \times  \sum_{ \stackrel{i+j\geq1}{j\leq m, i\leq k+1}} {m\choose j} {{k+1}\choose i}  \norm{\comi y^\ell  (\partial_x^j \partial_y^i u)\partial_x^{m-j+1}\partial_y^{k+1-i}u}_{L^2}\bigg]^2,\\
	S_2=&\sum_{m,k\geq0} \bigg[  (m+k+1)^{-1/2}(m+1)  H_{\rho , m+1,k}   \\
	&\qquad\qquad\qquad \times  \sum_{ \stackrel{i+j\geq1}{j\leq m, i\leq k+1}} {m\choose j} {{k+1}\choose i}  \norm{\comi y^\ell     (\partial_x^j \partial_y^i v)\partial_x^{m-j}\partial_y^{k+2-i} u}_{L^2}\bigg]^2.
\end{align*}
To estimate $S_2$, we write
\begin{align*}
&	(m+k+1)^{-1/2}(m+1)  H_{\rho , m+1,k}    \\
&\qquad\qquad  \times  \sum_{ \stackrel{i+j\geq1}{j\leq m, i\leq k+1}} {m\choose j} {{k+1}\choose i}  \norm{\comi y^\ell     (\partial_x^j \partial_y^i v)\partial_x^{m-j}\partial_y^{k+2-i} u}_{L^2} \\
&\leq r_{m,k}+p_{m,k}+q_{m,k}
\end{align*}
with
\begin{align*}
&	r_{m,k} = (m+k+1)^{-1/2}(m+1)  H_{\rho , m+1,k}   \sum_{ j=1}^{ m } {m\choose j}   \norm{\comi y^\ell     (\partial_x^j   v)\partial_x^{m-j}\partial_y^{k+2} u}_{L^2}\\
	&+ (m+k+1)^{-{1\over2}}(m+1)(k+1)  H_{\rho , m+1,k}   \sum_{ j=0}^{ m } {m\choose j}   \norm{\comi y^\ell     (\partial_x^j \partial_y  v)\partial_x^{m-j}\partial_y^{k+1} u}_{L^2},
\end{align*}
and
\begin{align*}
	p_{m,k}&=\sum_{ \stackrel{  i+j\leq [(m+k+1)/2] }{j\leq m,\  2\leq i\leq  k+1}}{m\choose j} {{k+1}\choose i} \frac{ (m+k+1)^{-1/2}(m+1)  H_{\rho , m+1,k}}{H_{\rho, j+4,i-1}H_{\rho, m-j+1,k+1-i}} \\
	&\qquad\times \big( H_{\rho, j+4,i-1}\norm{ \partial_x^j \partial_y^i v}_{L^\infty}\big)\big(H_{\rho, m-j+1,k+1-i}\norm{\comi y^\ell \partial_x^{m-j}\partial_y^{k+2-i} u}_{L^2} \big),\\
	q_{m,k}&=\sum_{ \stackrel{ i+j \geq [(m+k+1)/2]+1 }{j\leq m,\  2\leq i\leq  k+1}}{m\choose j} {{k+1}\choose i}  \frac{(m+k+1)^{-1/2}(m+1) H_{\rho , m+1,k}}{H_{\rho, j+2,i-2}H_{\rho, m-j+3,k+2-i}} \\
	&\quad\times \big( H_{\rho, j+2,i-2}\norm{ \partial_x^j \partial_y^i v}_{L^2}\big)\big(H_{\rho, m-j+3,k+2-i}\norm{\comi y^\ell \partial_x^{m-j}\partial_y^{k+2-i} u}_{L^\infty   } \big).
\end{align*}
Like the proof of \eqref{tanest},  we can obtain
\begin{equation*}
	 \sum_{m,k\geq0} r_{m,k}^2  \leq  C \big(1+|\vec a|_{X_\rho}^4 \big)  |\vec a|_{Y_\rho}^2.
\end{equation*}
Moreover,  if $1\leq i+j\leq [(m+k+1)/2] ,$ then
\begin{equation}\label{fe10}
\begin{aligned}
&	{m\choose j} {{k+1}\choose i}  \frac{(m+k+1)^{-{1\over 2}}(m+1) H_{\rho , m+1,k}}{H_{\rho, j+4,i-1}H_{\rho, m-j+1,k+1-i}}\\
	&\leq ( j+4) (m-j+1)\frac{1}{ (i+j+1)^2}    (m+k-i-j+2)^{1/2},
	\end{aligned}
\end{equation}
where its proof is given inAppendix \ref{sec:app}.   Then following  the argument after \eqref{ambm} and \eqref{nm}, we obtain
\begin{align*}
 \sum_{m,k\geq0} p_{m,k}^2 &\leq  \bigg(\sum_{j\geq 0,i\geq 2} \frac{(j+4)H_{\rho,j+4,i-1} \norm{ \partial_x^j \partial_y^{i} v}_{L^\infty}}{(i+j+1)^2}\bigg)^2\\
 &\qquad\qquad\qquad\quad\times \sum_{i,j\geq 0} (i+j+1) (j+1)^2 H_{\rho, j+1,i}\norm{\comi y^\ell \partial_x^{j}\partial_y^{i} \partial_y u}_{L^2}^2\\
	&  \leq  C \big(1+|\vec a|_{X_\rho}^2 \big)  |\vec a|_{Y_\rho}^2.
\end{align*}
Similarly,  by using
\begin{equation}\label{laetimate}
{m\choose j} {{k+1}\choose i}  \frac{(m+k+1)^{-{1\over 2}}(m+1) H_{\rho , m+1,k}}{H_{\rho, j+2,i-2}H_{\rho, m-j+3,k+2-i}}\leq C \frac{ j+1   }{(m+k-i-j+2)^2 }
\end{equation}
for any pair $(i,j)$ with $   [(m+k+1)/2]\leq  i+j \leq m+k +1$  (see Appendix \ref{sec:app} for its proof), we have 
\begin{align*}
	\sum_{m,k\geq0} q_{m,k}^2\leq C\big(1+|\vec a|_{X_\rho}^2 \big)  |\vec a|_{Y_\rho}^2.
\end{align*}
Thus combining  the above estimates gives
\begin{align*}
	S_2\leq C \sum_{m,k\geq0} (r_{m,k}^2+p_{m,k}^2+q_{m,k}^2) \leq   C\big(1+|\vec a|_{X_\rho}^4 \big)  |\vec a|_{Y_\rho}^2. 
\end{align*}
Similarly,
\begin{equation*}
	S_1\leq C\big(1+|\vec a|_{X_\rho}^4 \big)  |\vec a|_{Y_\rho}^2.
\end{equation*}
This with \eqref{s1s2} and \eqref{taestimate} yields
  the statement in Proposition \ref{prp:varu}. The proof of  Proposition \ref{prp:varu} is  completed.

\section{Proof of the \emph{a priori} estimate} \label{sec:comple}

We now study the Gevrey estimate on the auxiliary functions $\mathcal U$ and $\mathcal \lambda$ defined in \eqref{mau}
	and \eqref{ma} to complete the proof of Theorem \ref{thm:apriori}.

\begin{proposition}\label{prp:ulam}
Under the assumptions of Theorem \ref{thm:apriori}, we have
	\begin{multline*}
		 \frac12\frac{d}{dt} \sum_{m=0}^{+\infty} \inner{ N_{\rho,m+1}^2\norm{  \partial_x^m  \mathcal U }_{L^2}^2+ (m+1) N_{\rho,m+1}^2 \norm{  \comi y^{\ell-1} \partial_x^m  \lambda }_{L^2}^2 } \\  \leq -\mu \sum_{m=0}^{+\infty}   \Big[ (m+1)N_{\rho,m+1}^2\norm{  \partial_x^m  \mathcal U }_{L^2}^2+(m+1)^2N_{\rho,m+1}^2 \norm{ \comi y^{\ell-1}  \partial_x^m  \lambda}_{L^2}^2 \Big]\\
		 +C (1+|\vec a|_{X_\rho}^4)|\vec a|_{Y_\rho}^2.
	\end{multline*}
\end{proposition}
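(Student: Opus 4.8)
The plan is to run weighted $L^2$ energy estimates on the two transport equations \eqref{ymau} and \eqref{lateq}, mirroring the scheme of Sections \ref{sec:tange}--\ref{sec:var}. Concretely, I apply $\partial_x^m$ to \eqref{ymau} and $\comi y^{\ell-1}\partial_x^m$ to \eqref{lateq}, pair with $\partial_x^m\mathcal U$ and $\comi y^{\ell-1}\partial_x^m\lambda$ respectively in $L^2(\mathbb R_+^2)$, and sum over $m\geq0$ against the weights $N_{\rho,m+1}^2$ and $(m+1)N_{\rho,m+1}^2$. Since $\partial_xu+\partial_yv=0$ and $v|_{y=0}=0$, the principal convective piece $(u\partial_x+v\partial_y)\partial_x^m\mathcal U$ is skew-symmetric and drops out, and for $\lambda$ the only convective remnant is the weight-commutator $\inner{(v\partial_y\comi y^{\ell-1})\partial_x^m\lambda,\ \comi y^{\ell-1}\partial_x^m\lambda}_{L^2}$, which by Hardy's inequality for $v$ is absorbed into $C(1+|\vec a|_{X_\rho}^2)|\vec a|_{Y_\rho}^2$. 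Moving $\frac{d}{dt}$ of the weights to the right via \eqref{dev:rho}, i.e. $\frac{d}{dt}N_{\rho,m+1}^2=-2\mu(m+2)N_{\rho,m+1}^2\le-2\mu(m+1)N_{\rho,m+1}^2$, produces exactly the dissipative terms $-\mu\sum(m+1)N_{\rho,m+1}^2\norm{\partial_x^m\mathcal U}_{L^2}^2$ and $-\mu\sum(m+1)^2N_{\rho,m+1}^2\norm{\comi y^{\ell-1}\partial_x^m\lambda}_{L^2}^2$ on the right of the assertion.

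Next I estimate the source terms on the right of \eqref{ymau} and \eqref{lateq} and the commutators $[\partial_x^m,u\partial_x+v\partial_y]$. The two coupling terms $\partial_x\lambda$ (from \eqref{ymau}) and $\partial_x\varphi$ (from \eqref{lateq}) lose one tangential derivative, but they close: $\partial_x^{m+1}\lambda$ and $\partial_x^{m+1}\varphi$ are measured in $|\vec a|_{X_\rho},|\vec a|_{Y_\rho}$ at index $m+2$, so after using $N_{\rho,m+1}\leq C\rho^{-1}(m+2)^{3/2}N_{\rho,m+2}$ the surplus powers of $(m+1)$ are matched by the extra factor $(m+1)$ carried by $|\vec a|_{Y_\rho}$ over $|\vec a|_{X_\rho}$, and $\rho(t)\geq e^{-1}\rho_0$ from \eqref{loup} turns $\rho^{-1}$ into an absolute constant; a Cauchy--Schwarz in $m$ then gives these terms a bound $C|\vec a|_{Y_\rho}^2$. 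The remaining source terms are genuine products --- $(\partial_x\partial_yu)\int_0^y\mathcal U\,d\tilde y$, $(\partial_xu)\mathcal U$, $(\partial_xu)\partial_xu$, $(\partial_y\varphi)\int_0^y\mathcal U\,d\tilde y$, together with the tangential commutator pieces $\sum_{j\geq1}\binom mj(\partial_x^ju)\partial_x^{m-j+1}(\cdot)$ --- and are handled exactly as in Lemmas \ref{lem:tang}--\ref{lem:tan}: split the $j$-sum at $j\leq[m/2]$ and $j>[m/2]$, place the high-order factor in $L^2$ and the low-order one in $L_x^\infty L_y^2$ (or vice versa), invoke the factorial bounds on ratios of $N_{\rho,\cdot}$ of the type \eqref{fe1}--\eqref{fe6}, the discrete Young inequality \eqref{dis}, Sobolev embedding, Hardy's inequality, and Lemma \ref{lem:tang} with Corollary \ref{cor:v} to control the tangential derivatives of $u$ and $v$ not directly present in the norms; the antiderivative factors are dealt with by $\abs{\int_0^y\partial_x^j\mathcal U\,d\tilde y}\leq\comi y^{1/2}\norm{\partial_x^j\mathcal U(x,\cdot)}_{L_y^2}$.

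The genuinely delicate point is the $[\partial_x^m,v\partial_y]$ commutator acting on $\mathcal U$ and on $\lambda$: it produces the terms $\sum_{j\geq1}\binom mj(\partial_x^jv)\partial_x^{m-j}\partial_y\mathcal U$ and $\sum_{j\geq1}\binom mj(\partial_x^jv)\partial_x^{m-j}\partial_y\lambda$, while the norms $|\vec a|_{X_\rho},|\vec a|_{Y_\rho}$ carry no normal derivatives of the auxiliary functions $\mathcal U$ and $\lambda$. The remedy is to integrate by parts in $y$ (the boundary term at $y=0$ vanishes since $\partial_x^iv|_{y=0}=0$, and at $y=\infty$ by the weighted decay), which moves the normal derivative off the auxiliary function and, because $\partial_y(\partial_x^jv)=-\partial_x^{j+1}u$, partly converts it into an extra tangential derivative of $u$ controlled by Lemma \ref{lem:tang}; for the residual normal derivative of $\lambda$ one substitutes the algebraic identity $\partial_y\lambda=\partial_x\partial_yu-(\partial_y^2u)\int_0^y\mathcal U\,d\tilde y-(\partial_yu)\mathcal U$ read off from \eqref{ma}, after which every factor is a quantity measured by $|\vec a|_{X_\rho}$ and $|\vec a|_{Y_\rho}$; the analogous residual normal derivative of $\mathcal U$ is removed using the precise form of the right-hand side of \eqref{ymau} (equivalently, by differentiating the $f$-equation \eqref{mau} in $y$). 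I expect this $y$-integration-by-parts bookkeeping, and verifying that no hidden extra loss of tangential derivatives creeps in through it, to be the main obstacle.

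With all these bounds in hand, adding the estimates for $\mathcal U$ and $\lambda$, and noting that every error has been shown to be at most $C(1+|\vec a|_{X_\rho}^4)|\vec a|_{Y_\rho}^2$ while the weight-derivative contributions supply the negative $-\mu(\,\cdot\,)$ terms, yields precisely the inequality in Proposition \ref{prp:ulam}.
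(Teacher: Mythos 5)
Your scheme mirrors the paper's: energy estimates on the transport equations \eqref{ymau} and \eqref{lateq}, dissipation from $\frac{d}{dt}N_{\rho,m+1}^2$, the $N_{\rho,m+1}/N_{\rho,m+2}\lesssim (m+2)^{3/2}$ shift to absorb the one-derivative-losing couplings $\partial_x\lambda$ and $\partial_x\varphi$, and the $[m/2]$-split plus the factorial bounds of Appendix~\ref{sec:app} for the product terms. You are also right to flag the $[\partial_x^m, v\partial_y]$ commutators as the delicate point — notably, the paper's displayed equations for $\partial_x^m\mathcal U$ and $\comi y^{\ell-1}\partial_x^m\lambda$ in the proof of Proposition~\ref{prp:ulam} simply do not show these commutators, so your worry is directed at a step the paper leaves implicit.

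The fix you give, however, works only for $\lambda$, not for $\mathcal U$, and this is a real gap. For $\lambda$ the algebraic identity $\partial_y\lambda=\partial_x\partial_yu-(\partial_y^2u)\int_0^y\mathcal U\,d\tilde y-(\partial_yu)\mathcal U$ from \eqref{ma} expresses $\partial_y\lambda$ (and after $\partial_x^m$, each resulting factor) in quantities that $|\vec a|_{X_\rho},|\vec a|_{Y_\rho}$ control, so your argument closes there. For $\mathcal U$ there is no such algebraic relation: $\mathcal U$ is defined only as a solution of the transport Cauchy problem \eqref{mau}, so ``differentiating the $f$-equation in $y$'' produces yet another transport equation for $\partial_y\mathcal U$, not a pointwise formula — it gives no bound on $\|\partial_x^{m-j}\partial_y\mathcal U\|_{L^2}$ in terms of the $X_\rho$/$Y_\rho$ quantities. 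The integration by parts step also does not help: from
$\int(\partial_x^jv)(\partial_x^{m-j}\partial_y\mathcal U)(\partial_x^m\mathcal U)\,dx\,dy$
one obtains, after moving $\partial_y$,
$\int(\partial_x^{j+1}u)(\partial_x^{m-j}\mathcal U)(\partial_x^m\mathcal U)\,dx\,dy-\int(\partial_x^jv)(\partial_x^{m-j}\mathcal U)(\partial_x^m\partial_y\mathcal U)\,dx\,dy$.
The first term is fine, but the second still contains $\partial_y\mathcal U$ — now with the full $m$ tangential derivatives, i.e.\ it is no better than (indeed worse than) what you started with, and a second integration by parts only returns you to the original expression. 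Since the norms in Definition~\ref{defabnorm} carry no normal derivatives of $\mathcal U$ whatsoever, you cannot conclude the claimed bound by $C(1+|\vec a|_{X_\rho}^4)|\vec a|_{Y_\rho}^2$ from the ingredients you cite. You would need to supply an actual mechanism — a cancellation, a reformulation in terms of $f=\int_0^y\mathcal U$, or an additional controlled quantity — to dispose of the $\partial_y\mathcal U$ commutator, and that is precisely the piece your write-up (like the paper's own abbreviated proof) leaves unjustified.
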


\begin{proof}
  It follows from \eqref{ymau} that
\begin{align*}
	\inner{\partial_t+u\partial_x+v\partial_y} \partial_x^m \mathcal U
= \partial_x^{m+1} \lambda +\partial_x^m\Big[ (\partial_x\partial_yu)\int_0^y\mathcal U(t,x,\tilde y)  d\tilde y+(\partial_xu)\mathcal U \Big].
\end{align*}
This, with the fact that $\frac{1}{2}\frac{d}{dt}N_{\rho,m+1}^2\leq -\mu(m+1) N_{\rho,m+1}^2$ and
\begin{align*}
	\sum_{m=0}^{+\infty} (m+1)  N_{\rho,m+1}^2 \norm{  \partial_x^m  \mathcal U }_{L^2}^2 \leq |\vec a|_{Y_\rho}^2,
\end{align*}
yields that
\begin{align*}
	&\frac12\frac{d}{dt} \sum_{m=0}^{+\infty} N_{\rho,m+1}^2 \norm{  \partial_x^m  \mathcal U }_{L^2}^2    \leq -\mu \sum_{m=0}^{+\infty} (m+1)  N_{\rho,m+1}^2  \norm{  \partial_x^m  \mathcal U }_{L^2}^2\\
	 &\qquad\qquad+   |\vec a|_{Y_\rho}^2 +\sum_{m=0}^{+\infty} (m+1)^{-1} N_{\rho,m+1}^2\norm{\partial_x^{m+1}\lambda}_{L^2}^2\\
	 &\qquad\qquad+\sum_{m=0}^{+\infty}   (m+1)^{-1}N_{\rho,m+1}^2 \big\|\partial_x^m\big[ (\partial_x\partial_yu)\int_0^y\mathcal U(t,x,\tilde y)  d\tilde y+(\partial_xu)\mathcal U \big]\big\|_{L^2}^2.
\end{align*}
By definition \eqref{defY}  of $|\vec a|_{Y_\rho}$ and the fact that  $N_{\rho,m+1}/N_{\rho,m+2}\leq C (m+1)^{3/2}$,  we have
\begin{multline*}
	\sum_{m=0}^{+\infty} (m+1)^{-1} N_{\rho,m+1}^2\norm{\partial_x^{m+1}\lambda}_{L^2}^2 \leq C \sum_{m=0}^{+\infty}  (m+1)^2   N_{\rho,m+2}^2\norm{\partial_x^{m+1}\lambda}_{L^2}^2\\
	\leq C\sum_{m=0}^{+\infty}   (m+1)^2  N_{\rho,m+1}^2\norm{\partial_x^{m}\lambda}_{L^2}^2\leq C |\vec a|_{Y_\rho}^2.
\end{multline*}
Following a similar argument as in the proof of Lemmas  \ref{lem:tang} and \ref{lem:tan},  we conclude
\begin{multline*}
	\sum_{m=0}^{+\infty}(m+1)^{-1}N_{\rho,m+1}^2 \big\|\partial_x^m\big[ (\partial_x\partial_yu)\int_0^y\mathcal U(t,x,\tilde y)  d\tilde y+(\partial_xu)\mathcal U \big]\big\|_{L^2}^2\\
	\leq C \big(1+|\vec a|_{X_\rho}^4 \big)|\vec a|_{Y_\rho}^2.
\end{multline*}
As a result, combining the above estimates yields
\begin{equation}\label{muc}
	 \frac12\frac{d}{dt} \sum_{m=0}^{+\infty} N_{\rho,m+1}^2 \norm{  \partial_x^m  \mathcal U }_{L^2}^2    \leq C (1+|\vec a|_{X_\rho}^4)|\vec a|_{Y_\rho}^2 -\mu \sum_{m=0}^{+\infty} (m+1)  N_{\rho,m+1}^2  \norm{  \partial_x^m  \mathcal U }_{L^2}^2.
\end{equation}
It remains to estimate $\lambda$, and  it follows from \eqref{lateq} that
\begin{multline*}
	\big(\partial_t  +u\partial_x +v\partial_y  \big)\comi y^{\ell-1} \partial_x^m\lambda
	= v(\partial_y\comi y^{\ell-1}) \partial_x^m\lambda\\
	+\comi y^{\ell-1} \partial_x^{m+1}\varphi  -\comi y^{\ell-1} \partial_x^m\Big[(\partial_xu)\partial_xu+  (\partial_y\varphi)  \int_0^y\mathcal U d\tilde y\Big].
\end{multline*}
This with the fact that
 $$\sum_{m=0}^{+\infty} (m+1)^2 N_{\rho,m+1}^2 \norm{ \comi y^{\ell-1}  \partial_x^m  \lambda}_{L^2}^2\leq |\vec a|_{Y_\rho}^2,$$
 yields
\begin{align*}
	&\frac{1}{2}\frac{d}{dt} \sum_{m=0}^{+\infty} (m+1) N_{\rho,m+1}^2 \norm{  \comi y^{\ell-1} \partial_x^m  \lambda }_{L^2}^2 \\
	&   \leq -\mu \sum_{m=0}^{+\infty}  (m+1)^2 N_{\rho,m+1}^2  \norm{ \comi y^{\ell-1}  \partial_x^m  \lambda}_{L^2}^2+ C|\vec a|_{X_\rho}^3 +  |\vec a|_{Y_\rho}^2  \\
	 &\quad +\sum_{m=0}^{+\infty}  N_{\rho,m+1}^2\norm{\comi y^{\ell-1}\partial_x^{m+1}\varphi}_{L^2}^2\\
	 &\quad +\sum_{m=0}^{+\infty}   N_{\rho,m+1}^2 \big\|\comi y^{\ell-1}\partial_x^m\big[(\partial_xu)\partial_xu+  (\partial_y\varphi)  \int_0^y\mathcal U d\tilde y \big]\big\|_{L^2}^2.
\end{align*}
Note that 
\begin{multline*}
	\sum_{m=0}^{+\infty}  N_{\rho,m+1}^2\norm{\comi y^{\ell-1}\partial_x^{m+1}\varphi}_{L^2}^2\\
	\leq C \sum_{m=0}^{+\infty}  (m+2)^3 N_{\rho,m+2}^2\norm{\comi y^{\ell}\partial_x^{m+1}\varphi}_{L^2}^2\leq C|\vec a|_{Y_\rho}^2.
\end{multline*}
By a similar argument as the proof of Lemma \ref{lem:tang},  we have 
\begin{align*}
	\sum_{m=0}^{+\infty}   N_{\rho,m+1}^2 \big\|\comi y^{\ell-1}\partial_x^m\big[(\partial_xu)\partial_xu+  (\partial_y\varphi)  \int_0^y\mathcal U d\tilde y \big]\big\|_{L^2}^2 \leq C\big(1+|\vec a|_{X_\rho}^4 \big)|\vec a|_{Y_\rho}^2.
\end{align*}
Consequently, combining the above inequalities yields
\begin{multline*}
	\frac{1}{2}\frac{d}{dt} \sum_{m=0}^{+\infty} (m+1) N_{\rho,m+1}^2 \norm{  \comi y^{\ell-1} \partial_x^m  \lambda }_{L^2}^2 \\
	    \leq -\mu \sum_{m=0}^{+\infty}  (m+1) ^2 N_{\rho,m+1}^2  \norm{ \comi y^{\ell-1}  \partial_x^m  \lambda}_{L^2}^2+   C\big(1+|\vec a|_{X_\rho}^4 \big) |\vec a|_{Y_\rho}^2.
\end{multline*}
This and  \eqref{muc} yields the statement in Proposition \ref{prp:ulam} so that the proof of Proposition \ref{prp:ulam} is   completed.
\end{proof}

\begin{proof}
	[Proof of Theorem \ref{thm:apriori}] With the two estimates in Propositions \ref{prp:varu} and \ref{prp:ulam} and by noting the definitions \eqref{defX} and \eqref{defY} of $|\vec a|_{X_\rho}$ and $|\vec a|_{Y_\rho}$, we have
	\begin{equation}\label{laet}
		\frac12\frac{d}{dt}|\vec a|_{X_\rho}^2\leq \inner{-\mu+C +C |\vec a|_{X_\rho}^4}|\vec a|_{Y_\rho}^2.
	\end{equation}
	Now we choose $\mu$ large enough such that
	\begin{equation}\label{mu++}
		\mu \geq \frac12+  C + C     (2C_0)^4 \Big(\norm{u_0}_{G^{3/2,1}_{2\rho_0,\ell}}+\norm{u_1}_{G^{3/2,1}_{2\rho_0,\ell+1}}\Big)^4,
	\end{equation}
	where $C_0$ is the constant in \eqref{init}.
	Then under assumption \eqref{bootass}, \eqref{laet} and \eqref{mu++} yield
	\begin{align*}
	\forall\ t\in [0, T], \quad 		\frac12\frac{d}{dt}|\vec a(t)|_{X_\rho}^2\leq -\frac12 |\vec a(t)|_{Y_\rho}^2.
	\end{align*}
	By  \eqref{init}, we have 
		\begin{align*}
	  	\sup_{0\leq t\leq T}\abs{\vec a(t)}_{X_\rho} +\Big(\int_0^T |\vec a(t)|_{Y_\rho}^2 dt \Big)^{1\over2} \leq \abs{\vec a(0)}_{X_\rho}\leq C_0\big(\norm{u_0}_{G^{3/2,1}_{2\rho_0,\ell}}+\norm{u_1}_{G^{3/2,1}_{2\rho_0,\ell+1}}\big).
	\end{align*}
	This is  \eqref{dssert} so that the proof  of Theorem \ref{thm:apriori} is completed.
\end{proof}

 \section{The 3D hyperbolic Prandtl equation}\label{sec:3D}

 The discussion on the 3D hyperbolic Prandtl model is  similar to that of the 2D case with slight modifications on auxiliary functions. Precisely,
  we will use vector-valued auxiliary functions instead of the scalar ones used in the previous sections.   We  denote by $\vec{u}=(u_{1},u_{2})$ and $v$  the tangential and normal velocities respectively,   and   by $(x,y) $ the spatial variables in $\mathbb R^2\times \mathbb R_+ $ with $x=(x_{1},x_{2})$.    As the counterparts   of  the auxiliary functions defined in Section \ref{sec:aux}, we set
  \begin{align*}
  	\vec{  \varphi}=	\partial_t\vec u+(\vec u\cdot\partial_x) \vec u+  v\partial_y \vec u  \  \textrm{  with  }  \    v(t, x,y)=-\int_0^y \partial_x \cdot \vec u(t,x,\tilde y) d\tilde y.
  \end{align*}
  Moreover, we define
    $ \vec{ \mathcal U}=(\mathcal U_1,\mathcal U_2)$ and     $\vec{  \lambda } =(\lambda_1,\lambda_2,\lambda_3,\lambda_4) $  as follows. Let $\mathcal U_j, j=1,2,$ solve the Cauchy problem
 \begin{eqnarray*}
 \left\{
 \begin{aligned}
 & \big(\partial_t+ \vec{u}\cdot \partial_x  + v\partial_y \big)    \int_0^y\mathcal U_j(t,x,\tilde y)  d\tilde y  =  -\partial_{x_j}  v,\\
 & \mathcal U_j|_{t=0}=0.
 \end{aligned}
 \right.
 \end{eqnarray*}
 Accordingly,
 set
 \begin{eqnarray*}
 	\left\{
 	\begin{aligned}
 		\lambda_1&=\partial_{x_1}u_1- (\partial_yu_1)\int_0^y\mathcal U_1 (t,x,\tilde y) d\tilde y,\quad
 		\lambda_2=\partial_{x_2}u_1- (\partial_yu_1)\int_0^y\mathcal U_2 (t,x,\tilde y) d\tilde y,\\
 		\lambda_3&=\partial_{x_1}u_2-  (\partial_yu_2)\int_0^y\mathcal U_1 (t,x,\tilde y) d\tilde y,\quad  \lambda_4=\partial_{x_2}u_2-  (\partial_yu_2)\int_0^y\mathcal U_2(t,x,\tilde y)  d\tilde y.
 	\end{aligned}
 	\right.
 \end{eqnarray*}	
 We also  denote  that
 $$ \vec{ b} =( \vec{u}, \vec{  \mathcal U}, \vec{\lambda}, \vec\varphi),$$
 and  define $|\vec{ b}|_{X_\rho} $ and $|\vec{ b}|_{Y_\rho}$ as in Definition \ref{defabnorm}.
 Then the {\it  a priori} estimate  in Theorem \ref{thm:apriori}  also holds with $\vec a$ replaced by   $ \vec{ b} $.  This can be derived in the same way for the  2D case.  For brevity,   we omit the details.

\appendix

\section{Proof of some estimates}\label{sec:app}

Finally, we present the proof of some estimates used in the previous sections.

\begin{proof}
	[Proof of \eqref{fe1}] For any  $0\leq j\leq [m/2]$, we have that  $m-j\approx m$ so that 
		\begin{align*}
&\frac{m! }{ j!(m-j)!} \frac{ N_{\rho,m+1}  }{ N_{\rho, j+3} N_{m-j+1,\rho}}\\
 & =
\frac{m!}{j!(m-j)!}
\times\frac{\rho^{m+2}(m+2)^9}{[(m+1)!]^{3/2}}\times
\frac{[(j+3)!]^{3/2}}{\rho^{j+4}(j+4)^9}\times\frac{[(m-j+1)!]^{3/2}}{\rho^{m-j+2}(m-j+2)^9}\\
&\lesssim    j! ^{1/2}[(m-j)!]^{1/2}  \times \frac{ j^{9/2}(m-j+1)^{3/2}}{\rho^4 m!^{1/2} (m+1)^{3/2} ( j+4)^9}\lesssim \frac{1}{j+1},
\end{align*}
where we have used the fact that $p!q!\leq (p+q)!$ and  \eqref{loup} in the last inequality.
This gives estimate
\eqref{fe1}.
\end{proof}

 \begin{proof}[Proof of \eqref{fe2}] For any $j$  with $ [m/2]+1\leq j\leq m,$ we have
\begin{align*}
	&\frac{m!}{j!(m-j)!} \frac{N_{\rho, m+1}}{N_{\rho,j+1} N_{\rho,m-j+3} }\\
 & =
\frac{m!}{j!(m-j)!}
\times\frac{\rho^{m+2}(m+2)^9}{[(m+1)!]^{3/2}}\times
\frac{[(j+1)!]^{3/2}}{\rho^{j+2}(j+2)^9}\times\frac{[(m-j+3)!]^{3/2}}{\rho^{m-j+4}(m-j+4)^9}\\
&\lesssim    j! ^{1/2}[(m-j)!]^{1/2}  \times \frac{j^{3/2}(m-j+1)^{9/2} }{\rho^4 m!^{1/2} (m+1)^{3/2} (m- j+4)^9}\lesssim \frac{1}{m-j+1}.\end{align*}
This gives \eqref{fe2}.	
\end{proof}

\begin{proof}[Proof of \eqref{fe3}] For $1\leq i \leq [(k+1)/2]$ we have $k-i\approx k$ so that
 	\begin{align*}
 		& \frac{(k+1)!}{i!(k+1-i)!}  \frac{(k+1)^{-1/2} L_{\rho ,k} }{H_{\rho,4 ,i-1}L_{\rho ,k+1-i}  } \\
 		&\lesssim \frac{(k+1)!}{i!(k+1-i)!}  (k+1)^{-{1\over2}}\times\frac{\rho^{k+2}(k+2)^9}{(k+1)!}\times
\frac{ (i+3)! }{\rho^{i+4}(i+4)^9}\times\frac{ (k+2-i)! }{\rho^{k+3-i}(k+3-i)^9}\\
 		&\lesssim  \frac{ (k+1)^{- 1/2} (k+2-i)}{  (i+1)^{6}}\lesssim  \frac{ (k+2-i)^{ 1/2}}{ i+1}. 	\end{align*}
 		Hence, \eqref{fe3} holds.
 \end{proof}

\begin{proof}[Proof of \eqref{fe4}]
	We use the fact that  $[(k+1)/2]+1 \leq i\leq k+1$  to have
	\begin{align*}
		&\frac{(k+1)!}{i!(k+1-i)!}  \frac{(k+1)^{-1/2} L_{\rho ,k} }{H_{\rho,2, i-2}H_{\rho,3 ,k+2-i}  }\\
		&\lesssim \frac{(k+1)!}{i!(k+1-i)!}  (k+1)^{-{1\over2}}\times\frac{\rho^{k+2}(k+2)^9}{(k+1)!}\times
\frac{ i! }{\rho^{i+1}(i+1)^9}\times\frac{ (k+5-i)! }{\rho^{k+6-i}(k+6-i)^9}\\
 		&\lesssim  \frac{ 1}{  (k+5-i)^5}\lesssim  \frac{ 1}{  k+3-i },
	\end{align*}
	which is \eqref{fe4}.
\end{proof}

 \begin{proof}
	[Proof of \eqref{fe6}] For any $1\leq j\leq [m/2]$ so that
  $m-j\approx m$,  and thus,
\begin{align*}
  &\frac{m!}{j!(m-j)!}    \frac{(m+1)^{1/2} N_{\rho ,m+1} }{ N_{\rho , j+3}H_{\rho ,m-j+1,1}  }\\
  &=
\frac{m! (m+1)^{1\over 2}}{j!(m-j)!}
\times\frac{\rho^{m+2}(m+2)^9}{[(m+1)!]^{3/2}}\times
\frac{[(j+3)!]^{3/2}}{\rho^{j+4}(j+4)^9}\times\frac{(m-j+2)![(m-j+1)!]^{1\over2}}{\rho^{m-j+3}(m-j+3)^9}\\
&\lesssim      j! ^{1/2}[(m-j)!]^{1/2} (m+1)^{1/2} \times \frac{j^{9/2}(m-j+1)^{5/2} }{ m!^{1/2} (m+1)^{3/2} (j+4)^9}\lesssim \frac{(m-j+1)^{3/2}}{j+1}.
\end{align*}
\end{proof}

\begin{proof}
	[Proof of \eqref{fe5}]
	For $[m/2]+1\leq j\leq m$,  we have
	\begin{align*}
&	\frac{m!}{j!(m-j)!}   \frac{  (m+1)^{1/2}  N_{\rho ,m+1} }{ N_{\rho ,j+1}H_{\rho ,m-j+3,1}  } \\
& =
\frac{m! (m+1)^{1\over2}}{j!(m-j)!}
\times\frac{\rho^{m+2}(m+2)^9}{[(m+1)!]^{3/2}}\times
\frac{[(j+1)!]^{3\over2}}{\rho^{j+2}(j+2)^9}\times\frac{(m-j+4)![(m-j+3)!]^{1\over2}}{\rho^{m-j+5}(m-j+5)^9}\\
&\lesssim     j! ^{1/2}[(m-j)!]^{1/2}(m+1)^{1/2}  \times \frac{j^{3/2}(m-j+1)^{11/2} }{  m!^{1/2} (m+1)^{3/2} (m- j+5)^9}\\
&\lesssim  \frac{(j+1)^{1/2}}{m-j+1}.
\end{align*}
This gives \eqref{fe5}. 
\end{proof}

\begin{proof}
	[Proof of \eqref{fe10}] For $1\leq i+j\leq [(m+k+1)/2]$,  we have $m+k-i-j\approx m+k$ 
	so that 
	\begin{align*}
	&{m\choose j} {{k+1}\choose i} (m+k+1)^{-{1\over 2}}(m+1) \frac{ H_{\rho , m+1,k}}{H_{\rho, j+4,i-1}H_{\rho, m-j+1,k+1-i}}\\
	&=\frac{m!}{j!(m-j)!}\frac{(k+1)!}{i!(k+1-i)!}(m+k+1)^{-{1\over 2}}(m+1) \times\frac{\rho^{m+k+2}(m+k+2)^9}{(m+k+1)! [(m+1)!]^{1/2}}\\
	&\qquad\qquad\quad \times \frac{(i+j+3)! [(j+4)!]^{1/2}}{\rho^{i+j+4}(i+j+4)^9} \times \frac{(m+k-i-j+2)! [(m-j+1)!]^{1/2}}{\rho^{m+k-i-j+3}(m+k-i-j+3)^9}\\
	&\lesssim  \frac{(m!)^{1/2}}{ (j!)^{1/2}[(m-j)!]^{1/2}}   \frac{(k+1)!}{i!(k+1-i)!} \times \frac{(i+j+3)! (m+k-i-j+2)! }{ (m+k+1)!}\\
	&\qquad \times (m+k+1)^{-{1\over 2}}(m+1) ^{1/2} \frac{(j+1)^2(m-j+1)^{1/2} }{(i+j+4)^9 } .
\end{align*}
Moreover, by using 
\begin{align*}
	\forall\ \alpha,\beta\in\mathbb Z_+^2 \ \textrm{ with }\ \beta\leq\alpha, \quad {\alpha\choose\beta}\leq {{\abs\alpha}\choose{\abs\beta}},
\end{align*}
we have
\begin{multline*}
	\frac{(m!)^{1/2}}{ (j!)^{1/2}[(m-j)!]^{1/2}}   \frac{(k+1)!}{i!(k+1-i)!}\\
	\leq \frac{ m! }{  j!   (m-j)! }   \frac{(k+1)!}{i!(k+1-i)!}\leq \frac{(m+k+1)!}{(i+j)!(m+k+1-i-j)!}.
\end{multline*}
We then combine the above estimates and observe that  $m+k-i-j\approx m+k $ for $1\leq i+j\leq [(m+k+1)/2]$   to have
\begin{align*}
	&{m\choose j} {{k+1}\choose i} (m+k+1)^{-{1\over 2}}(m+1) \frac{ H_{\rho , m+1,k}}{H_{\rho, j+4,i-1}H_{\rho, m-j+1,k+1-i}}  \\
	&\lesssim     (m+k+1)^{-{1\over 2}}(m+1) ^{1\over2}\frac{(j+1)^2(m-j+1)^{1\over2} (i+j+1)^3 (m+k-i-j+2)}{(i+j+4)^9 } \\
	&\lesssim     (m+1) ^{1/2}(m-j+1)^{1/2}   \frac{1}{ (i+j+3)^4}    (m+k-i-j+2)^{1/2} \\
	&\lesssim   ( j+4) (m-j+1)\frac{1}{ (i+j+1)^2}    (m+k-i-j+2)^{1/2}.
\end{align*}
The proof of \eqref{fe10} is  completed.
\end{proof}

\begin{proof}
	[Proof of \eqref{laetimate}] For any   $[(m+k+1)/2]\leq i+j\leq m+k+1$, following a similar argument as above,  we have
		\begin{align*}
		&{m\choose j} {{k+1}\choose i}  \frac{(m+k+1)^{-1/2}(m+1) H_{\rho , m+1,k}}{H_{\rho, j+2,i-2}H_{\rho, m-j+3,k+2-i}}\\
		&=\frac{m!}{j!(m-j)!}\frac{(k+1)!}{i!(k+1-i)!}(m+k+1)^{-{1\over 2}}(m+1) \times\frac{\rho^{m+k+2}(m+k+2)^9}{(m+k+1)! [(m+1)!]^{1/2}}\\
	&\qquad\qquad\quad \times \frac{(i+j)! [(j+2)!]^{1/2}}{\rho^{i+j+1}(i+j+1)^9} \times \frac{(m+k-i-j+5)! [(m-j+3)!]^{1/2}}{\rho^{m+k-i-j+6}(m+k-i-j+6)^9}\\
	&\lesssim  \frac{(m!)^{1/2}}{ (j!)^{1/2}[(m-j)!]^{1/2}}   \frac{(k+1)!}{i!(k+1-i)!} \times \frac{(i+j)! (m+k-i-j+5)! }{ (m+k+1)!}\\
	&\qquad \times (m+k+1)^{-{1\over 2}}(m+1) ^{1/2} \frac{(j+1) (m-j+1)^{3/2} }{(m+k-i-j+6)^9 }\\
	&\lesssim    (m+k+1)^{-{1\over 2}}(m+1) ^{1/2} \frac{(j+1) (m-j+1)^{3/2} }{(m+k-i-j+6)^5 }\\
	&\lesssim    \frac{(j+1)  }{(m+k-i-j+2)^2 }.
	\end{align*}
	Hence, \eqref{laetimate} follows.
\end{proof}

\begin{proof}
	[Proof of \eqref{init}] By \eqref{defX}, \eqref{mau} and \eqref{ma}, we have
	\begin{align*}
			& |\vec a(0)|_{X_{\rho_0}}^2 =   \sum_{ m=0}^{\infty}    (m+1)     N_{\rho_0,m+1}^2    \norm{\comi y^{\ell-1}\partial_x^{m+1} u_0}_{L^2}^2  \\
	 & \qquad +  \sum_{m,  k\geq 0}       (m+1)^2 H_{\rho_0,m+1,k} ^2\inner{ \norm{  \comi y^\ell  \partial_x^m\partial_y^k\varphi_0 }_{L^2}^2+\norm{  \comi y^\ell  \partial_x^m\partial_y^{k}\partial_y u_0}_{L^2}^2}.\end{align*}
	 This together with 
	 \begin{align*}
	 	\sum_{m,  k\geq 0}       (m+1)^2  H _{\rho_0,m+1 ,k} ^2 \norm{  \comi y^\ell  \partial_x^m\partial_y^{k}\partial_y u_0}_{L^2}^2 \leq \norm{u_0}_{G^{3/2,1}_{2\rho_0,\ell}}^2,
	 \end{align*}
	 and 
	 \begin{align*}
	 	& \sum_{ m=0}^{\infty}    (m+1)     N_{\rho_0,m+1}^2    \norm{\comi y^{\ell-1}\partial_x^{m+1} u_0}_{L^2}^2\\
	 	&\qquad=\sum_{ m=0}^{\infty}    (m+1)\Big(\frac{1}{2} \Big)^{m+2}    N_{2\rho_0,m+1}^2    \norm{\comi y^{\ell-1}\partial_x^{m+1} u_0}_{L^2}^2 \leq 2\norm{u_0}_{G^{3/2,1}_{2\rho_0,\ell}}^2,
	 \end{align*}
	yields
	 \begin{equation}\label{ini+}
			  |\vec a(0)|_{X_{\rho_0}}^2 \leq 3\norm{u_0}_{G^{3/2,1}_{2\rho_0,\ell}}^2
	  +  \sum_{m,  k\geq 0}       (m+1)^2 H_{\rho_0,m+1,k} ^2  \norm{  \comi y^\ell  \partial_x^m\partial_y^k\varphi_0 }_{L^2}^2.
	  \end{equation}
It follows from \eqref{vazero} and $\ell\geq2$ that
\begin{align*}
	& \norm{\comi y^\ell  \partial_x^m\partial_y^k\varphi_0 }_{L^2}\leq  \norm{  \comi y^\ell \partial_x^m\partial_y^ku_1 }_{L^2}\\
	&\qquad +C\sum_{j\leq m,\ i\leq k}{m\choose j}{k\choose i}\norm{\comi y^{\ell-1}\partial_x^{m+1-j}\partial_y^{k-i}u_0}_{L^2}\\
	&\qquad\qquad\qquad\qquad\qquad \times\Big(\norm{\comi y^{\ell-1}\partial_x^j\partial_y^i  u_0}_{L^\infty}+\norm{\comi y^\ell\partial_x^j\partial_y^i \partial_yu_0}_{L_x^\infty L_y^2}\Big).
\end{align*}
	 Hence, by using  Young's inequality
	\eqref{dis}  for   discrete convolution and the fact that $kr^k\leq (1-r)^{-1}$ for any pair $(r,k)\in [0,1/2]\times\mathbb Z_+$, we conclude that
	 \begin{align*}
	 	 \sum_{m,  k\geq 0}       (m+1)^2 H_{\rho_0,m+1 ,k} ^2  \norm{  \comi y^\ell  \partial_x^m\partial_y^k\varphi_0 }_{L^2}^2\leq C\Big(\norm{u_1}_{G^{3/2,1}_{2\rho_0, \ell+1}}^2+ \norm{u_0}_{G^{3/2,1}_{2\rho_0, \ell}}^2\Big).
	 \end{align*}
	 This and \eqref{ini+} imply \eqref{init}. 
\end{proof}

{\bf Acknowledgments.}W.-X. Li is supported by the National Natural Science Foundation of China (Nos. 11961160716, 12325108, 12131017, 12221001) and  the Natural Science Foundation of Hubei Province (No. 2019CFA007). 
T.  Yang is supported by the startup fund of the Hong Kong Polytechnic University P0043962 and the Research Centre for Nonlinear Analysis.
P. Zhang is partially  supported by National Key R$\&$D Program of China under grant
  2021YFA1000800, K. C. Wong Education Foundation and
  by National Natural Science Foundation of China  under Grant 12031006.


\begin{thebibliography}{10}

\bibitem{2021arXiv211113052A}
N.~{Aarach}.
\newblock {Global well-posedness of 2D Hyperbolic perturbation of the
  Navier-Stokes system in a thin strip}.
\newblock {\em arXiv e-prints}, page arXiv:2111.13052, Nov. 2021.

\bibitem{MR3942552}
B.~Abdelhedi.
\newblock Global existence of solutions for hyperbolic {N}avier-{S}tokes
  equations in three space dimensions.
\newblock {\em Asymptot. Anal.}, 112(3-4):213--225, 2019.

\bibitem{MR3327535}
R.~Alexandre, Y.-G. Wang, C.-J. Xu, and T.~Yang.
\newblock Well-posedness of the {P}randtl equation in {S}obolev spaces.
\newblock {\em J. Amer. Math. Soc.}, 28(3):745--784, 2015.

\bibitem{MR0032898}
C.~Cattaneo.
\newblock Sulla conduzione del calore.
\newblock {\em Atti Sem. Mat. Fis. Univ. Modena}, 3:83--101, 1949.

\bibitem{MR95680}
C.~Cattaneo.
\newblock Sur une forme de l'\'{e}quation de la chaleur \'{e}liminant le
  paradoxe d'une propagation instantan\'{e}e.
\newblock {\em C. R. Acad. Sci. Paris}, 247:431--433, 1958.

\bibitem{MR3795028}
D.~Chen, Y.~Wang, and Z.~Zhang.
\newblock Well-posedness of the linearized {P}randtl equation around a
  non-monotonic shear flow.
\newblock {\em Ann. Inst. H. Poincar\'{e} Anal. Non Lin\'{e}aire},
  35(4):1119--1142, 2018.

\bibitem{MR4506781}
O.~Coulaud, I.~Hachicha, and G.~Raugel.
\newblock Hyperbolic quasilinear {N}avier-{S}tokes equations in {$\Bbb{R}^2$}.
\newblock {\em J. Dynam. Differential Equations}, 34(4):2749--2785, 2022.

\bibitem{MR3925144}
H.~Dietert and D.~G\'{e}rard-Varet.
\newblock Well-posedness of the {P}randtl equations without any structural
  assumption.
\newblock {\em Ann. PDE}, 5(1):Paper No. 8, 51, 2019.

\bibitem{MR1476316}
W.~E and B.~Engquist.
\newblock Blowup of solutions of the unsteady {P}randtl's equation.
\newblock {\em Comm. Pure Appl. Math.}, 50(12):1287--1293, 1997.

\bibitem{MR2601044}
D.~G\'{e}rard-Varet and E.~Dormy.
\newblock On the ill-posedness of the {P}randtl equation.
\newblock {\em J. Amer. Math. Soc.}, 23(2):591--609, 2010.

\bibitem{GIM}
D.~{G\'{e}rard-Varet}, S.~{Iyer}, and Y.~{Maekawa}.
\newblock {Improved well-posedness for the Triple-Deck and related models via
  concavity}.
\newblock {\em arXiv e-prints}, page arXiv:2205.15829, May 2022.

\bibitem{MR3429469}
D.~Gerard-Varet and N.~Masmoudi.
\newblock Well-posedness for the {P}randtl system without analyticity or
  monotonicity.
\newblock {\em Ann. Sci. \'{E}c. Norm. Sup\'{e}r. (4)}, 48(6):1273--1325, 2015.

\bibitem{MR4149066}
D.~G\'{e}rard-Varet, N.~Masmoudi, and V.~Vicol.
\newblock Well-posedness of the hydrostatic {N}avier-{S}tokes equations.
\newblock {\em Anal. PDE}, 13(5):1417--1455, 2020.

\bibitem{MR2849481}
Y.~Guo and T.~Nguyen.
\newblock A note on {P}randtl boundary layers.
\newblock {\em Comm. Pure Appl. Math.}, 64(10):1416--1438, 2011.

\bibitem{MR3461362}
M.~Ignatova and V.~Vicol.
\newblock Almost global existence for the {P}randtl boundary layer equations.
\newblock {\em Arch. Ration. Mech. Anal.}, 220(2):809--848, 2016.

\bibitem{MR3284569}
I.~Kukavica, N.~Masmoudi, V.~Vicol, and T.~K. Wong.
\newblock On the local well-posedness of the {P}randtl and hydrostatic {E}uler
  equations with multiple monotonicity regions.
\newblock {\em SIAM J. Math. Anal.}, 46(6):3865--3890, 2014.

\bibitem{MR4465902}
W.-X. Li, N.~Masmoudi, and T.~Yang.
\newblock Well-posedness in {G}evrey function space for 3{D} {P}randtl
  equations without structural assumption.
\newblock {\em Comm. Pure Appl. Math.}, 75(8):1755--1797, 2022.

\bibitem{MR3983729}
W.-X. Li, V.-S. Ngo, and C.-J. Xu.
\newblock Boundary layer analysis for the fast horizontal rotating fluids.
\newblock {\em Commun. Math. Sci.}, 17(2):299--338, 2019.

\bibitem{LPZ}
W.-X. {Li}, M.~{Paicu}, and P.~{Zhang}.
\newblock {Gevrey solutions of quasi-linear hyperbolic hydrostatic
  Navier-Stokes system}.
\newblock {\em To appear in SIAM J. Math. Anal.}, 2023.

\bibitem{MR3493958}
W.-X. Li, D.~Wu, and C.-J. Xu.
\newblock Gevrey class smoothing effect for the {P}randtl equation.
\newblock {\em SIAM J. Math. Anal.}, 48(3):1672--1726, 2016.

\bibitem{MR4498949}
W.-X. Li and R.~Xu.
\newblock Gevrey well-posedness of the hyperbolic {P}randtl equations.
\newblock {\em Commun. Math. Res.}, 38(4):605--624, 2022.

\bibitem{MR4494626}
W.-X. Li, R.~Xu, and T.~Yang.
\newblock Global well-posedness of a {P}randtl model from {MHD} in {G}evrey
  function spaces.
\newblock {\em Acta Math. Sci. Ser. B (Engl. Ed.)}, 42(6):2343--2366, 2022.

\bibitem{MR4055987}
W.-X. Li and T.~Yang.
\newblock Well-posedness in {G}evrey function spaces for the {P}randtl
  equations with non-degenerate critical points.
\newblock {\em J. Eur. Math. Soc. (JEMS)}, 22(3):717--775, 2020.

\bibitem{CMAA-1-471}
W.-X. Li and T.~Yang.
\newblock {3D} hyperbolic {N}avier-{S}tokes equations in a thin strip: Global
  well-posedness and hydrostatic limit in {G}evrey space.
\newblock {\em Communications in Mathematical Analysis and Applications},
  1(4):471--502, 2022.

\bibitem{MR3458159}
C.-J. Liu, Y.-G. Wang, and T.~Yang.
\newblock On the ill-posedness of the {P}randtl equations in three-dimensional
  space.
\newblock {\em Arch. Ration. Mech. Anal.}, 220(1):83--108, 2016.

\bibitem{MR3600083}
C.-J. Liu, Y.-G. Wang, and T.~Yang.
\newblock A well-posedness theory for the {P}randtl equations in three space
  variables.
\newblock {\em Adv. Math.}, 308:1074--1126, 2017.

\bibitem{MR3670620}
C.-J. Liu and T.~Yang.
\newblock Ill-posedness of the {P}randtl equations in {S}obolev spaces around a
  shear flow with general decay.
\newblock {\em J. Math. Pures Appl. (9)}, 108(2):150--162, 2017.

\bibitem{MR4271962}
M.~Paicu and P.~Zhang.
\newblock Global existence and the decay of solutions to the {P}randtl system
  with small analytic data.
\newblock {\em Arch. Ration. Mech. Anal.}, 241(1):403--446, 2021.

\bibitem{MR4429384}
M.~Paicu and P.~Zhang.
\newblock Global hydrostatic approximation of the hyperbolic {N}avier-{S}tokes
  system with small {G}evrey class 2 data.
\newblock {\em Sci. China Math.}, 65(6):1109--1146, 2022.

\bibitem{MR3085226}
R.~Racke and J.~Saal.
\newblock Hyperbolic {N}avier-{S}tokes equations {II}: {G}lobal existence of
  small solutions.
\newblock {\em Evol. Equ. Control Theory}, 1(1):217--234, 2012.

\bibitem{WW2022}
C.~{Wang} and Y.~{Wang}.
\newblock {Optimal Gevrey stability of hydrostatic approximation for the
  Navier-Stokes equations in a thin domain}.
\newblock {\em arXiv e-prints}, page arXiv:2206.03873, June 2022.

\bibitem{2021arXiv210300681Y}
C.~{Wang}, Y.~{Wang}, and P.~{Zhang}.
\newblock {On the global small solution of 2-D Prandtl system with initial data
  in the optimal Gevrey class}.
\newblock {\em arXiv e-prints}, page arXiv:2103.00681, Feb. 2021.

\bibitem{MR2020656}
Z.~Xin and L.~Zhang.
\newblock On the global existence of solutions to the {P}randtl's system.
\newblock {\em Adv. Math.}, 181(1):88--133, 2004.

\bibitem{CMAA-1-345}
T.~Yang.
\newblock Vector fields of cancellation for the prandtl operators.
\newblock {\em Communications in Mathematical Analysis and Applications},
  1(2):345--354, 2022.

\bibitem{MR3464051}
P.~Zhang and Z.~Zhang.
\newblock Long time well-posedness of {P}randtl system with small and analytic
  initial data.
\newblock {\em J. Funct. Anal.}, 270(7):2591--2615, 2016.

\end{thebibliography}
\end{document}